\documentclass[a4paper,11pt,reqno]{amsart}
\usepackage[T1]{fontenc}
\usepackage[utf8]{inputenc}
\usepackage[margin=1.0in]{geometry}
\usepackage{lmodern}
\usepackage[english]{babel}
\usepackage{graphicx}
\usepackage{amsmath}
\usepackage{amsfonts}
\usepackage{amssymb} 
\usepackage{amsthm}
\usepackage{bbm}
\usepackage{bm} 
\usepackage{lipsum}
\usepackage{xcolor}
\usepackage{url}
\usepackage{hyperref}
\usepackage{stmaryrd}
\usepackage{comment}

\allowdisplaybreaks

\theoremstyle{plain} 
\newtheorem{thm}{Theorem}
\newtheorem{prop}[thm]{Proposition}

\newtheorem{ass}{Assumption}

\newcommand{\jo}[1]{{\textcolor{blue}{#1}}}

\newtheorem{innercustomgeneric}{\customgenericname}
\providecommand{\customgenericname}{}
\newcommand{\newcustomtheorem}[2]{%
  \newenvironment{#1}[1]
  {%
   \renewcommand\customgenericname{#2}%
   \renewcommand\theinnercustomgeneric{##1}%
   \innercustomgeneric
  }
  {\endinnercustomgeneric}
}

\newcustomtheorem{customthm}{Theorem}

\newtheorem{theorem}[thm]{Theorem}
\newtheorem{remark}[thm]{Remark}

\newtheorem{lemma}[thm]{Lemma}
\newtheorem{proposition}[thm]{Proposition}

\theoremstyle{definition}

\newtheorem{example}[thm]{Example}

\def \be {\begin{equation}}
\def \ee {\end{equation}}

\def \E {\mathbb{E}}
\def \P {\mathbb{P}}
\def \R {\mathbb{R}}

\def\red{\textcolor{red}}

\def \tra {\top}
\def \I {I}

\renewcommand{\phi}{\varphi}
\renewcommand{\epsilon}{\varepsilon}
\renewcommand{\tilde}{\widetilde}
\renewcommand{\hat}{\widehat}
\renewcommand{\bar}{\overline}

\DeclareMathOperator*{\argmin}{arg\,min}

\begin{document}

\title[Convergence for LQ potential MFG]{Convergence for linear quadratic potential mean field games}

\author{Alekos Cecchin}
\author{Jodi Dianetti}

\address[A. Cecchin]
{University of Padova, Department of Mathematics ``Tullio Levi-Civita'', 
 \newline \indent Via Trieste 63, 35121 Padova, Italy}
\email{alekos.cecchin@unipd.it}

\address[J. Dianetti]
{University of Rome Tor Vergata, Department of Economics and Finance,
\newline \indent Via Columbia 2, 00133 Roma, Italy
}
\email{jodi.dianetti@uniroma2.it}

\thanks{
A.C. acknowledges support from the project MeCoGa ``Mean field control and games'' of the University of Padova through the program STARS@UNIPD,
the PRIN 2022 Project 2022BEMMLZ ``Stochastic control and games and the role of information'', 
 the INdAM-GNAMPA Project 2025 ``Stochastic control and MFG under asymmetric information: methods and applications'' and the PRIN 2022 PNRR Project P20224TM7Z  ``Probabilistic methods for energy transition''.}

\date{\today}
\subjclass{
49N10, 
49N70, 
49N80, 
60J60, 
91A06, 
91A15, 
91A16} 

\keywords{Mean field games, potential game, linear-quadratic, convergence problem, selection principle, stochastic optimal control, stochastic maximum principle, common noise, vanishing viscosity}

\begin{abstract}
This paper studies the limits of empirical means of open-loop Nash equilibria of linear-quadratic stochastic differential games as the number of players goes to infinity, when the corresponding mean field game is of potential type and may have multiple equilibria. 
Via weak compactness arguments, the limit points are characterized as optimal trajectories of the related deterministic control problem, thus ruling out some of the mean field  equilibria.
Our result is obtained by first connecting the finite player game to a suitable control problem, whose optimal trajectories are the empirical means of Nash equilibria of the game, and in which the number of players $N$ becomes a parameter.  
True convergence to the unique minimizer of the limit control problem then holds for almost every initial mean.  
In cases of multiple optimizers, we focus on  examples to show that some   symmetry of the data ensures that the sequence admits a random limit which is  distributes uniformly among the minimizers of the potential. 
Multidimensional examples of the convergence result appear here for the first time, which show the flexibility of our method. 
We also establish a similar convergence results for the corresponding linear-quadratic potential mean field games with common noise, as the noise vanishes. 
\end{abstract}

\maketitle
 
\tableofcontents

\section{Introduction}
Mean field games (MFGs) were introduced independently in \cite{HuangMalhameCaines06} and \cite{LasryLions07} as limit models for symmetric stochastic differential games with mean field interaction, as the number of players tends to infinity. 
This approach provides a way of analyzing games with a large number of players, which are otherwise intractable due to the curse of dimensionality that arises when numerically solving the equations associated with Nash equilibria. This line of research has culminated in several textbooks (see e.g. \cite{cardaliaguet.delarue.lasry.lions.2019, CarmonaDelarue18, achdou2021mean}), which survey and synthesize the many advances that MFGs have brought to probability theory, analysis, and applications in the social sciences.

The connection between the MFG and the related 
$N$-player game can be understood in two directions. 
On the one side, approximation results show that each mean field equilibrium provides an approximate Nash equilibrium, with an error that vanishes as the number of players $N$ tends to infinity (see e.g. \cite{carmona.delarue.2013, HuangMalhameCaines06}).  
On the other side, convergence results provide sufficient conditions under which any sequence of Nash equilibria for the $N$-player game converges (up to subsequences) to a MFG equilibrium. 
However, MFGs may admit equilibria that are not limit points of any sequence of Nash equilibria.  
In other words, while every mean field equilibrium provides an approximate Nash equilibrium, some MFG equilibria are not informative about the true Nash equilibria, and the equilibrium behavior of the $N$-player system may look very different from that of certain equilibria of the limit model; see \cite{cardaliaguet.delarue.2022selected, graber2025remarks} for a recent survey.

This paper addresses the problem of determining which mean field equilibria arise as limit points of Nash equilibria in potential linear--quadratic models.
Through the lens of the potential, our main results characterize the limits of Nash equilibria as minimizers of a limiting control problem.
These results are then used to construct the limiting distribution in a series of examples, where certain symmetries in the model data are present and multiple MFG equilibria arise.
 
\subsection{Background and motivation}
Solving a mean field game typically amounts to solving a fixed point problem, which can be formulated either via a system of partial differential equations (see e.g. \cite{LasryLions07, achdou2021mean})),  via the best response map  (see e.g. \cite{dianetti.ferrari.fischer.nendel.2019, Lacker15}), or via the probabilistic system of forward-backward stochastic differential equations (FBSDEs) (see \cite{ahuja.ren.yang.2019, carmona.delarue.2013, dianetti2022strong}). 
Relying on the continuity of the problem and on structural conditions on the game, existence of solutions typically holds and  can be addresses via topological fixed point theorems (see \cite{CarmonaDelarueLacker16, Lacker15}), or via lattice theoretical fixed point theorems (see \cite{carmona.delarue.lacker.2017.timing, dianetti2022strong, dianetti.ferrari.fischer.nendel.2019, dianetti.ferrari.fischer.nendel.2022unifying}).
%
The search for equilibria relies on different techniques in the case of MFGs of \emph{potential} type, that is, when the data of the model are given by derivatives of some functions with respect to the measure variable.
In this case, the game can be linked to a mean field control problem: 
any solution of the control problem is a MFG equilibrium, since the
system of forward-backward equations of the MFG represents the necessary conditions for optimality given by the maximum principle; see \cite{briani2018stable, carmona.delarue.2015forward,   cecchin2025weak, LasryLions07}.

The uniqueness of the equilibrium holds for small time horizons (see \cite{BardiFischer18, HuangMalhameCaines06}); however,  uniqueness is not guaranteed in general, and it depends on structural properties of the game.
These properties are often referred to as monotonicity conditions, and several versions have been proposed (see e.g. \cite{achdou2021mean, ahuja.ren.yang.2019, gangbo.meszaro.2022.global, gangbo.meszaros.mou.zhang.2022, mou.zhang.2022master.antimon}), the most common being the Lasry–Lions monotonicity condition, already employed in \cite{LasryLions07}; 
in a nutshell, such conditions mean that players prefer to spread instead of aggregate. 
A particular case of study is that of linear-quadratic MFGs; see \cite{bensoussan2016linear, huang2021linear}. 
Uniqueness typically holds for purely linear-quadratic MFGs, as the system of equations becomes finite-dimensional, and a form of monotonicity is verified. 
%
In contrast, cases in which multiple equilibria arise are common in several applications, when any monotonicity condition is violated.
Several examples of non-uniqueness have been proposed; see \cite{BardiFischer18, bayraktar.zhang.2020, Cecchin&DaiPra&Fischer&Pelino19, DelarueFT19}.  
All these examples satisfy the \emph{submodularity condition}, which is a structural condition in which players have an incentive to aggregate, and  which can be seen as a sort of antithetic version of the Lasry–Lions monotonicity.
Under submodularity, a lattice structure on the set of (possibly multiple) equilibria can be obtained; see \cite{carmona.delarue.lacker.2017.timing, dianetti2022strong, dianetti.ferrari.fischer.nendel.2019, dianetti.ferrari.fischer.nendel.2022unifying,mou.zhang.2024minimal}.
Moreover, when the strength of the interaction among players is sufficiently high, uniqueness is recovered; see \cite{ mou.zhang.2022master.antimon, dianetti.ferrari.federico.floccari.2024multiple}.

The multiplicity of equilibria is a crucial issue for the convergence problem.
The search for Nash equilibria of a $N$-player game, and also the question of convergence as $N$ grows, must take into account the information available to players, and is thus different if considering open-loop or closed-loop equilibria. As a general result, even in presence of multiple MFG equilibria, any sequence of (approximate) Nash equilibria, either open-loop or closed-loop, can be shown to be tight and have limit points supported on the set of weak MFG equilibria; 
see \cite{djete2023large,fischer.2017connection, lacker.2016general, lacker.leflem.2023closedloop.convergence}. If the MFG equilibrium is unique, other techniques can be employed to leverage the convergence to a quantitative result. Open-loop Nash equilibria can be characterized by a system of FBSDE, whose convergence is guaranteed under some form of monotonicity or smallness condition; see \cite{carmona.delarue.2015forward, lauriere2022convergence}.
On the other hand, closed-loop equilibria can be characterized by a system of PDEs (called the Nash system), whose convergence is established if it is possible to construct a solution to the master equation which is at least continuous: this is an equation which serves as the decoupling field of the forward-backward MFG system of PDEs or SDEs, and thus a regular solution can be defined when such system is uniquely solvable; see \cite{cardaliaguet.delarue.lasry.lions.2019, mou2024wellposedness}. 
Similar techniques have been employed to study convergence of Nash equilibria of linear-quadratic $N$-player games, in cases uniqueness holds in the limit; see \cite{bardi2014linear, cirant2025some, li.mou.wu.zhou.2023linearquadratic}. 

The convergence problem in case of multiple equilibria presents many open problems; we refer again to \cite{cardaliaguet.delarue.2022selected, graber2025remarks} for a survey.  
While any sequence of Nash equilibria converges, up to subsequences, to a weak mean field equilibrium, in general not all mean field equilibria arise as limit points of such sequences.
The question of which mean field equilibria is informative about the $N$-player game thus remains a central question in the theory and largely open. 
True convergence of the sequence of Nash equilibria to a unique mean field equilibrium or to a randomization of some of them is shown just in two particular examples:  a selection principle is established for open-loop equilibria in a linear-quadratic one-dimensional model in \cite{DelarueFT19}, and for closed-loop equilibria in a two-state model in \cite{Cecchin&DaiPra&Fischer&Pelino19}; 
these results hinge on a fine analysis and techniques which are specific of the considered models and difficult to generalize in other contexts.

A natural conjecture can be formulated for the convergence problem for potential MFGs. 
In that case, the optimizers of the additional control problem are also solutions to the MFG, but there are MFG equilibria which are not optimizers, but just stationary points.  
The conjecture is that limit points of Nash equilibria are supported on optimizers of the mean field control problem, thus ruling out equilibria which are not optimizers; this is shown just in the two particular one-dimensional examples \cite{Cecchin&DaiPra&Fischer&Pelino19,DelarueFT19}. Notably, the optimizer is typically unique in an open and dense set of probability measures \cite{cardaliaguet2023regularity}. 
We mention also the convergence problem for mean field control problems, for which the prelimit model is a cooperative $N$-player game; we refer to \cite{cardaliaguet2023sharp, cardaliaguet2023regularity} for recent results. 
However, the cooperative and non-cooperative $N$-player games are not related and in general not close; indeed, the $N$-player non-cooperative game (which is the prelimit of MFGs) is not, by its nature, of potential type. 
See the related notion of $\alpha$-Potential game recently introduced in \cite{ guo.zhang.2025potential, guo.li.zhang.2025distributed}. 
We also refer to  the recent \cite{campi.cannerozzy.cartellier.2025coarse, cannerozzi.ferrari.2026cooperation}, which compare cooperation and non-cooperation in the MFG context.

A related convergence problem is that of vanishing common noise. 
We refer to    
\cite{CarmonaDelarueLacker16, dianetti2022strong, gangbo.meszaros.mou.zhang.2022} for general existence results for MFGs with common noise, and to \cite{delarue2019restoring} for a case in which a form of noise restores uniqueness in presence of multiple MFG equilibria. 
For linear-quadratic MFGs, the finite-dimensional system becomes parabolic in presence of the usual common noise, and thus is shown to be uniquely solvable in \cite{tchuendom}. When the common noise restores uniqueness, one wonders about selection principles for MFG solutions as the noise vanishes. Results in this direction can be found in \cite{DelarueFT19} for a one-dimensional linear-quadratic example, and in \cite{cecchin.delarue.2021} for a model of potential finite state MFGs. 

These convergence problems can also be framed into the more general context of zero-noise limit of a system. 
Indeed, it is a well known fact in probability theory that certain deterministic systems become well posed when a small noise is plugged, even if the original deterministic system admits multiple solutions. 
However, finding the limit of the solution as the noise vanishes represents a challenging question, for which few results are known: we refer the interested reader to \cite{bafico.baldi.1982small, delarue.flandoli.2014transition, trevisan.2013zero} for one-dimensional models, and to \cite{delarue2019zero} for a potential multi-dimensional model. 


\subsection{Our results and methodology} 
We focus on potential MFGs in which multiple MFG equilibria arise. The cost we consider is linear-quadratic in the control variable and is a general potential function (not linear quadratic) of the empirical mean; thus our model is a perturbation of a linear-quadratic MFG. Our method consists in defining a control problem related to the open-loop Nash equilibrium of the $N$-player game, which has the Nash FBSDE system as the FBSDE derived via the Pontryagin stochastic maximum principle.  
More precisely, by averaging on the player's indexes the components of the solution to the Nash FBSDE, the linear quadratic structure allows to obtain another FBSDE which is written in terms of the empirical mean (i.e., the arithmetic mean of the states of the players at equilibrium).  
Such a system of FBSDEs is indeed the Pontryagin system related to a stochastic control problem in which the state process is the empirical mean, and with costs written in terms of the integral of the original costs and of some reminder costs. 
In a nutshell, we obtain a control problem whose minimizer is the empirical mean of the Nash equilibrium. 
We mention that, differently from other works, the cost of a player in the $N$-player depends on the other players through the empirical mean of the whole system, not through the empirical mean of the other players excluding the private state. 


Taking limits as $N$ goes to infinity, weak compactness arguments allow to show that every limit point of the sequence of empirical means related to Nash equilibria is supported on minimizers of a deterministic control problem; indeed, in linear-quadratic cases the reminder costs are shown to go to zero. Stationary points of this limit control problem are shown to be expectations of MFG solutions, thanks to Pontryagin's maximum principle. Hence, we show a selection principle as limits of empirical means are supported on optimizers, thus ruling out (expectation of) MFG equilibria which are not minimizers.  In particular, optimizers of deterministic control problem are known to be unique for almost every initial point, thus yielding a true convergence result to a unique deterministic limit. This also provides almost sure convergence of the corresponding decoupling fields, which are smooth solutions to a system of PDEs. 
We prove  similar results also when analyzing the vanishing common noise; in that case, there is no additional reminder in the cost.

When optimizers are not unique, we wonder whether the sequence of empirical means admits a true random limit which is supported in (some) optimizers. 
We then turn our focus on examples. 
When the potential is convex, the known convergence result under uniqueness of the equilibrium is recovered. 
When the potential is not convex, we study some examples with submodular structure in which multiple equilibria arise and use the previous convergence result to characterize the limiting distribution. 
In our examples, the symmetry of the data ensures that the limiting measure distributes uniformly among the minimizers of the potential. 
The exact characterization allows to show convergence without going through a subsequence and we partially recover the result of \cite{DelarueFT19}.   
Multidimensional examples of the convergence result appear here for the first time, which show the flexibility of our method when compared with the techniques in \cite{DelarueFT19}.

\subsection{Outline} 
The paper is organized as follows. In Section \ref{sec:LQMFG} we first describe the $N$-player games and their open-loop equilibria in \S \ref{sec:Nplayer}, then their related control problem for the empirical mean in \S \ref{sec:N-control}, and thus the potential MFG in \S \ref{sec:potentialMFG}.  Section \ref{sec:N-convergence} presents the main result on convergence of the $N$-player empirical means: see Theorem \ref{thm:convergence}. Section \ref{sec:MFG-common-noise} examines the potential MFG with common noise and the related convergence problem in Theorem \ref{thm:eps_convergence}.  Section \ref{sec:examples} presents the the convergence results for some models whereas optimizers of the deterministic control problem are not unique: a one-dimensional symmetric example is analyzed in Theorem \ref{prop case two minima general}, a submodular example is treated in Proposition \ref{prop symmetric case two minima}, and a multi-dimensional symmetric model is studied in Theorem \ref{prop multidimensional}.

\section{Linear quadratic mean field games and related control problems} 
\label{sec:LQMFG}

\subsection{The $N$-player game}
\label{sec:Nplayer}
On a complete probability space $(\Omega, \mathcal F, \mathbb P)$, consider $N$ independent $d$-dimensional  Brownian motions $W^1,...,W^N$ and $N$ independent $d$-dimensional square integrable random variables $\xi^1,...,\xi^N$, with $\xi^i$ being independent from $W^j$ for each $i,j=1,...,N$. 
Set $\bm{W}=(W^{1}, \ldots, W^{N})$, $\bm{\xi}=(\xi^{1}, \ldots, \xi^{N})$ and denote by  $\mathbb{F}^{\bm{W}, \bm{\xi}}$ the right continuous extension of the filtration generated by $\bm{W}$ and $\bm{\xi}$, augmented by the $\mathbb P$-null sets. 
Denote by $\mathcal A_N$ the space of (open-loop) controls; that is,  the space of $\mathbb R^d$-valued $\mathbb{F}^{\bm{W}, \bm{\xi}}$-progressively measurable square integrable processes.  

Consider $N$ players, indexed by $i=1,...,N$.
For $i=1,...,N$, when player $i$ chooses a (open-loop) control $\alpha^i \in \mathcal A_N$, its own state $X^i$ is determined as  the solution to the SDE
\be
\label{eq:SDE:N}
d X_{t}^{i}= (b X^i_t + \alpha^i_t ) d t+ \sigma d W_{t}^{i} , \quad X_{0}^{i}=\xi^{i},
\ee
for a matrix $b \in \R^{d\times d}$ and a constant $\sigma \in \mathbb R$.
The aim of player $i$ is to choose $\alpha^i$ in order to minimize the cost  
\be 
\label{eq:cost:N}
J^{i}(\alpha^i,\bm{\alpha}^{-i})=\mathbb{E}\bigg[\int_{0}^{T} \frac{1}{2} \left(|\alpha_{t}^{i} |^{2}+ \big(X_{t}^{i} + \nabla f(\begin{matrix} \frac{1}{N} \sum_{j=1}^{N} X^j_t \end{matrix}) \big)^2 \right) d t
+ \frac12 \big( X_{T}^{i} + \nabla g 
(\begin{matrix} \frac{1}{N} \sum_{j=1}^{N} X^j_T \end{matrix}) \big)^2\bigg],
\ee
where $\bm{\alpha}^{-i} = (\alpha^j)_{j\ne i}$ denotes the vector of strategies chosen the all the other players and $f,g: \mathbb{R}^{d} \rightarrow \mathbb{R}$ are $C^{2}$ functions, with (column) gradients $\nabla f, \nabla g$.

\begin{remark}
    We point out that our $N$-player game slightly differs from those considered in many related papers (see e.g. \cite{DelarueFT19, li.mou.wu.zhou.2023linearquadratic}), where player $i$'s optimization problem depends instead  on $\nabla f \big( \frac{1}{N-1} \sum _{j\ne i} X_t^j)$ and $\nabla g \big( \frac{1}{N-1} \sum _{j\ne i} X_T^j)$, thus being convex in player $i$'s own state $X^i$.
    Indeed, in our setup player $i$ optimization problem is not convex, and the related FBSDE is carries an extra non-linearity in the empirical mean $\frac{1}{N} \sum_{j=1}^{N} X^j$.

     We also notice that the cost is a nonlinear function of the empirical mean, thus it is a perturbation of a linear-quadratic model. 
    
    We finally mention that the case in which the cost is just of the form $x^i \nabla f(\tfrac1n \sum_{j=1}^N x^j)$ is not a particular case of what we consider, but the same results we provide in the paper could be derived in the same way for such formulation.
\end{remark}

An open-loop Nash equilibrium is a vector of strategies $\bm{\alpha} = (\alpha^1,...,\alpha^N)$ such that
$$
J^{i}(\alpha^i,\bm{\alpha}^{-i}) \leq J^{i}(\beta^i,\bm{\alpha}^{-i}), \quad \forall \ \beta^i \in \mathcal A , 
\quad i = 1,...,N.
$$
Thanks to the stochastic maximum principle (see e.g. \cite{CarmonaDelarue18}), one can determine the Nash equilibria of the game by solving the following system of FBSDEs:
\begin{equation}
\label{eq:FBSDE:N}
\left\{
\begin{array}{l}
d X_{t}^{i}=( b X^i_t-Y_{t}^{i}) d t+\sigma d W_{t}^{i}, \qquad X_{0}^{i}=\xi^{i},  \\
 d Y_{t}^{i}=- \Big[ b^\tra Y^i_t + 
 \Big( \I_d + \frac{1}{N} \nabla^{2} f\big(\frac{1}{N} \sum_{j} X_{t}^{j}\big) \Big) \Big( X^i_t +  \nabla f \big(\frac{1}{N} \sum_{j} X_{t}^{j}\big) \Big)    \Big] dt+\sum_{j=1}^{N} Z_{t}^{i j} d W_{t}^{j}, \\
Y_{T}^{i}=\Big( \I_d + \frac{1}{N} \nabla^{2} g \big(\frac{1}{N} \sum_{j} X_{T}^{j}\big) \Big) \Big( X^i_T +  \nabla g \big(\frac{1}{N} \sum_{j} X_{T}^{j}\big) \Big),   
\end{array}\right.
\end{equation}
where $\nabla^2 f, \nabla^2 g $ denote the Hessian matrices and we consider the derivative of the functions
$$
\begin{aligned}
 F^{i}(\bm{x})&= \frac12 \Big( x_{i} + \nabla f\big(\tfrac{1}{N} \sum_{j} x_{j}\big) \Big)^2, \qquad  \bm{x}=\left(x_{1},\dots, x_{N}\right), \quad x_{i} \in \mathbb{R}^{d}, \\
\frac{\partial F^i}{\partial x_{i}}(\bm{x})&= x_i + 
\nabla f \big(\tfrac{1}{N} \sum_{j} x_{j}\big)+\frac{1}{N} \nabla^{2} f \big(\tfrac{1}{N} \sum_{j} x_{j}\big) x_i  
+ \frac1N  \nabla^2 f \big(\tfrac{1}{N} \sum_{j} x_{j}\big) \nabla f \big(\tfrac{1}{N} \sum_{j} x_{j}\big),
\end{aligned}
$$
and 
$$
\begin{aligned}
 G^{i}(\bm{x})&= \frac12 \Big( x_{i} + \nabla g\big(\tfrac{1}{N} \sum_{j} x_{j}\big) \Big)^2, \qquad  \bm{x}=\left(x_{1},\dots, x_{N}\right), \quad x_{i} \in \mathbb{R}^{d}, \\
\frac{\partial G^i}{\partial x_{i}}(\bm{x})&= x_i + 
\nabla g \big(\tfrac{1}{N} \sum_{j} x_{j}\big)+\frac{1}{N} \nabla^{2} g \big(\tfrac{1}{N} \sum_{j} x_{j}\big) x_i  
+ \frac1N  \nabla^2 g \big(\tfrac{1}{N} \sum_{j} x_{j}\big) \nabla g \big(\tfrac{1}{N} \sum_{j} x_{j}\big).
\end{aligned}
$$
A solution to this system is denoted by a tuple $(\bm{X}, \bm{Y}, \bm{Z}) = (X^1,...,X^N,Y^1,...,Y^N,Z^1,...,Z^N)$, where for any $i$, $X^i,Y^i$ are $\R^d$-valued, $Z^i = (Z^{ij})_{j=1,...,N}$ with $Z^{ij}$ being valued in the space of $\R^{d \times d}$-matrices.

Given a solution $(\bm{X}, \bm{Y}, \bm{Z})$ to the Nash FBSDE \eqref{eq:FBSDE:N}, define the triple $(m^{N}, \eta^{N}, \zeta^{N}) = (m^{N}, \eta^{N}, \zeta^{1,N},...,\zeta^{N,N}) $ of empirical mean as
$$
m^N_t = \frac{1}{N} \sum_{i=1}^N X_{t}^{i}, \quad  \eta_{t}^{N}=\frac{1}{N} \sum_{i=1}^N Y_{t}^{i}
\quad \text{and}\quad
\zeta_{t}^{j, N}=\frac{1}{N} \sum_{i=1}^N Z_{t}^{i j}, \ j=1,...N.
$$
Then, $(m^{N}, \eta^{N}, \zeta^{N})$ solves the FBSDE
\be 
\label{eq:FBSDE:mean}
\left\{\begin{array}{l} 
d m_{t}^{N}= (b m^N_t-\eta_{t}^{N} ) dt+\frac{\sigma}{N} \sum_{j} d W_{t}^{j},   \\
d \eta_{t}^{N}= - \Big[ b^\tra \eta^N_t + \Big( \I_d + \frac{1}{N} \nabla^{2} f(m^N_t) \Big) \Big( m^N_t +  \nabla f (m^N_t) \Big) \Big] dt +\sum_{j} \zeta_{t}^{j,N} d W_{t}^{j}, \\
m^N_{0}=\frac{1}{N} \sum_{j} \xi^{i}, \qquad \eta_{T}^{N}=\Big( \I_d + \frac{1}{N} \nabla^{2} g(m^N_T) \Big) \Big( m^N_T +  \nabla g (m^N_T) \Big).
\end{array}\right.
\ee 

Our analysis hinges on  an existence and uniqueness result for the FBSDE \eqref{eq:FBSDE:mean}.
To this aim, we introduce the following requirements, which will be standing throughout the paper.
\begin{ass} 
    The functions $f$ and $g$ are in $C^2(\R^d)$, and the functions  $$ y \mapsto \nabla f (y), \nabla^2 f (y), \nabla^2 f(y) y,\nabla g (y), \nabla^2 g (y), \nabla^2 g(y) y$$ are bounded and Lipschitz continuous. 
\end{ass}
    
The proof of the following lemma, which is given in Appendix \ref{appendix lemma existence uniqueness FBSDE.mean}, 
exploits a transformation of the system using the associated Riccati equation (similarly, e.g., to the arguments already employed in \cite{DelarueFT19, tchuendom}); we remark that this is not a standard linear system. The assumption that $\nabla^2 f(y) y$, $\nabla^2 g(y) y$ are bounded is required only for this well-posedness result; notice that in the particular case of dimension one, such  conditions is not necessary. 

\begin{thm}\label{lemma existence uniqueness FBSDE.mean}
    For $\sigma >0$, the FBSDE \eqref{eq:FBSDE:mean} admits a unique solution.
\end{thm}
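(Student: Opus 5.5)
The approach is to decouple \eqref{eq:FBSDE:mean} through the Riccati equation of its linear part and then treat what remains as a nondegenerate FBSDE with bounded Lipschitz coefficients.

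First I isolate the linear part. Writing $B = \frac{\sigma}{N}\sum_j W^j$, which is $\frac{\sigma}{\sqrt N}$ times a standard $d$-dimensional Brownian motion, the driver of $\eta^N$ is
\[
b^\tra\eta + m + \phi_N(m), \qquad \phi_N(m) := \nabla f(m) + \tfrac1N \nabla^2 f(m)\, m + \tfrac1N \nabla^2 f(m)\nabla f(m).
\]
By the standing Assumption, $\phi_N$ is bounded and Lipschitz; this is exactly where the boundedness of $\nabla^2 f(y)y$ is used. In the same way the terminal condition is $m + \psi_N(m)$ with $\psi_N$ bounded and Lipschitz.

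Next I let $P$ solve the matrix Riccati equation
\[
\dot P + b^\tra P + P b - P^2 + \I_d = 0, \qquad P_T = \I_d.
\]
This has a unique global symmetric nonnegative solution, by the standard LQ theory: it is the value matrix of the control problem with cost $\frac12\int(|\alpha|^2+|x|^2)\,dt+\frac12|x_T|^2$. I then set $\tilde\eta_t = \eta^N_t - P_t m^N_t$. Itô's formula shows that $(m^N,\tilde\eta,\zeta)$ solves
\[
\left\{\begin{array}{l}
dm_t = \big((b-P_t)m_t - \tilde\eta_t\big)\,dt + dB_t,\\
d\tilde\eta_t = -\big[(b^\tra - P_t)\tilde\eta_t + \phi_N(m_t)\big]\,dt + \tilde\zeta_t\, dB\text{-terms},\\
\tilde\eta_T = \psi_N(m_T).
\end{array}\right.
\]
The unbounded linear coupling has disappeared, and the system is equivalent to the original one. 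What remains has a bounded, Lipschitz forward–backward coupling, linear in $\tilde\eta$ with bounded coefficients, and nondegenerate noise because $\sigma>0$.

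For this reduced system I can invoke known well-posedness results for nondegenerate FBSDEs with Lipschitz coefficients and bounded dependence on the forward variable, such as Delarue's 2002 result or the four-step scheme of Ma--Protter--Yong. Uniqueness then transfers back through the bijection $\eta = \tilde\eta + Pm$. If I prefer to be more concrete, I can go through the associated semilinear parabolic PDE
\[
\partial_t u + \tfrac{\sigma^2}{2N}\Delta u + \nabla u\,\big((b-P)x - u\big) + (b^\tra - P)u + \phi_N(x) = 0, \qquad u(T,\cdot) = \psi_N.
\]
Since $u$ enters the drift only through the term $-u$, I expect an a priori $L^\infty$ bound on $u$ from the boundedness of $\phi_N$ and $\psi_N$, for instance via the backward equation along a fixed $m$ together with Gronwall. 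A gradient bound would then follow by a Bernstein-type argument or by differentiating the FBSDE, and the decoupling field gives a unique solution.

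The main obstacle is securing that a priori Lipschitz bound on the decoupling field uniformly on $[0,T]$, since a Lipschitz contraction only works on small horizons. My plan is to argue as in Delarue: use the bounded terminal data and driver to get $\|u\|_\infty \le C$, then use nondegeneracy, in the form of Gaussian heat kernel estimates for $\tfrac{\sigma^2}{2N}\Delta$, to bound $\nabla u$ on $[0,T]$ and iterate the local solvability result backward in time.
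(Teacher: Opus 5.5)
Your proposal is correct and is essentially the paper's proof: the same Riccati equation and shift $\tilde\eta_t=\eta_t-P_t m_t$ remove the unbounded linear coupling, and the assumption on $\nabla^2 f(y)y$ and $\nabla^2 g(y)y$ makes the remaining source and terminal terms bounded and Lipschitz. The conclusion then follows from Delarue's 2002 theorem for nondegenerate FBSDEs (Theorem 2.6 there), with uniqueness transferred back through the bijection. The paper takes one extra step, the further change of variables $\tilde m_t=\Phi_t m_t$, which also removes the linear drift $(b-P_t)m_t$ from the forward equation; you should include it if the version of Delarue's result you invoke requires the forward drift to be bounded in the forward variable.
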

  
Let us recall that we study the strong formulation of the control problems  and thus we always consider strong solutions to FBSDEs. As a consequence, we obtain an existence and characterization result of the unique Nash equilibrium, for which we provide a proof in Appendix \ref{appendix thm:EU NE}.

\begin{thm}\label{thm:EU NE}
For $\sigma >0$ and $N\geq||\nabla^2 f||_\infty\vee||\nabla^2 g||_\infty$, the FBSDE \eqref{eq:FBSDE:N} admits a unique solution  $(\bm{X}, \bm{Y}, \bm{Z})$. Moreover, there exists a unique Nash equilibrium $\bm{\alpha} = (\alpha^1,...,\alpha^N)$; it is given by  $\alpha^i = -Y^i$ for any $i=1,...,N$.
\end{thm}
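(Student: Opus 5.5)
The plan is to decouple the Nash system \eqref{eq:FBSDE:N} through the mean system \eqref{eq:FBSDE:mean}, whose well-posedness is given by Theorem \ref{lemma existence uniqueness FBSDE.mean}.

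\emph{Existence.} Let $(m^N,\eta^N,\zeta^N)$ be the unique solution of \eqref{eq:FBSDE:mean}. Plugging $m^N$ into the nonlinear terms of \eqref{eq:FBSDE:N}, i.e.\ replacing $\frac1N\sum_j X^j_t$ by $m^N_t$, makes the $i$-th system a \emph{linear} FBSDE in $(X^i,Y^i,Z^i)$. Its coefficients are bounded progressively measurable matrices $A_t=\I_d+\frac1N\nabla^2 f(m^N_t)$ and $A_T'=\I_d+\frac1N\nabla^2 g(m^N_T)$, and its forcing terms are bounded, namely $A_t\nabla f(m^N_t)$ and $A'_T\nabla g(m^N_T)$. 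Under $N\ge \|\nabla^2 f\|_\infty\vee\|\nabla^2 g\|_\infty$ these matrices are symmetric and positive semidefinite. The linear system is then the Pontryagin system of a strictly convex LQ control problem with a random, positive semidefinite weight, so it has a unique solution. One can see this, for instance, through a stochastic Riccati (BSDE) decoupling, or through the monotonicity method of Peng--Wu, since the monotonicity condition $\langle A x,x\rangle\ge0$ holds. Call this solution $(X^i,Y^i,Z^i)$. Averaging over $i$ shows that $(\frac1N\sum_i X^i,\frac1N\sum_iY^i,\frac1N\sum_iZ^{i\cdot})$ solves the linear system with $m^N$ frozen. The triple $(m^N,\eta^N,\zeta^N)$ solves this same linear system, so by uniqueness of the averaged linear system the two coincide. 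Hence $\frac1N\sum_jX^j=m^N$, and $(\bm X,\bm Y,\bm Z)$ solves \eqref{eq:FBSDE:N}.

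\emph{Uniqueness.} Any solution of \eqref{eq:FBSDE:N} has empirical means solving \eqref{eq:FBSDE:mean}. By Theorem \ref{lemma existence uniqueness FBSDE.mean} these means equal $m^N$. The individual components then solve the frozen linear system and are unique by the previous step.

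\emph{Nash equilibrium.} Set $\alpha^i=-Y^i$. Fix $i$ and the strategies $\bm\alpha^{-i}$. We must show that $\alpha^i$ minimizes $J^i(\cdot,\bm\alpha^{-i})$. This is the main obstacle, because $J^i$ is not convex in $X^i$: the own state enters $\nabla f(\frac1N\sum_j X^j)$. The remedy is to view $F^i$ as a function of $x_i$ with the others fixed. Its Hessian in $x_i$ is
\[
\Big(\I_d+\tfrac1N\nabla^2 f\Big)^2+\tfrac1{N}\sum_k \tfrac{1}{N}\partial\nabla^2 f\,(x_i+\nabla f)_k .
\]
The first term is positive semidefinite. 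The second is not obviously controlled, since $f$ is only $C^2$, so I would avoid second derivatives. Instead I would write $x_i+\nabla f(\bar x)$ through the map $\phi(x_i)=x_i+\nabla f(\frac{x_i+s}{N})$, where $s$ is the sum of the other states. When $N\ge\|\nabla^2 f\|_\infty$, the map $\phi$ has Jacobian $\I_d+\frac1N\nabla^2 f\succeq0$ and is monotone, but $\frac12|\phi|^2$ need not be convex. This suggests that genuine sufficiency may fail, and that the paper's route is via necessity plus uniqueness. Since the costs are coercive (quadratic in $\alpha$, and the bounded $\nabla f,\nabla g$ give $|X+\nabla f|^2\ge \frac12|X|^2-C$), a minimizer of $J^i(\cdot,\bm\alpha^{-i})$ exists by weak lower semicontinuity in $L^2$ of the strong formulation. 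Any best response satisfies the stochastic maximum principle, so any Nash equilibrium yields a solution of \eqref{eq:FBSDE:N}, which is therefore unique. For existence of the equilibrium, I would check that $J^i(\cdot,\bm\alpha^{-i})$ has a unique critical point. This holds because the necessary FBSDE for player $i$ alone, with the others fixed, is again uniquely solvable under $N\ge\|\nabla^2 f\|_\infty\vee\|\nabla^2 g\|_\infty$ by the monotonicity argument above. The existing minimizer must be that critical point, which is $-Y^i$. This identification, i.e.\ well-posedness of the single-player Pontryagin system, is the step I expect to be the most delicate.
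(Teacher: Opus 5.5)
Your construction and uniqueness argument for the solution of \eqref{eq:FBSDE:N} coincides with Steps 1--2 of the paper: freeze $m^N$, solve the linear system with positive semidefinite weights, average, and invoke Theorem \ref{lemma existence uniqueness FBSDE.mean}. Your proof that the equilibrium is unique (necessity plus uniqueness of \eqref{eq:FBSDE:N}) is also sound. The gap is \emph{existence} of the equilibrium, i.e.\ showing that $-Y^i$ is a best response to $\bm\alpha^{-i}=(-Y^j)_{j\neq i}$. Two of your steps fail.

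(i) Weak lower semicontinuity in $L^2$ does not give a minimizer of $J^i(\cdot,\bm\alpha^{-i})$ in the strong formulation. Weak convergence of controls in $L^2(\Omega\times[0,T])$ gives only weak convergence of the states $X^{i,n}$, because there is no compactness in $\omega$; think of controls oscillating rapidly as functions of $\xi^i$ or of $W^i_{t_0}$. The integrand $x\mapsto\frac12|x+\nabla f(\frac{x+S_t}{N})|^2$, with $S_t=\sum_{j\neq i}X^j_t$, need not be convex, as you note yourself, so the cost is not weakly l.s.c.

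(ii) The single-player Pontryagin system is not covered by ``the monotonicity argument above''. That argument used $\langle A_t x,x\rangle\ge 0$ in the system where the empirical mean is frozen. In player $i$'s deviation problem the mean moves with $X^i$, and the driver is $x_i\mapsto\partial_{x_i}F^i(x_i,x^{-i})$ (similarly for $G^i$). This map is monotone if and only if $F^i$ is convex in $x_i$, which is exactly what fails. Uniqueness of critical points of a nonconvex functional is essentially as hard as the sufficiency you abandoned. Uniqueness of \eqref{eq:FBSDE:N} does not replace it: a best response $\hat\alpha^i\neq -Y^i$ only produces a profile $(\hat\alpha^i,\bm\alpha^{-i})$ that need not be an equilibrium, so no contradiction arises.

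The missing idea is to obtain existence of an equilibrium by an argument that never passes through a single player's critical points, and only afterwards identify it. This is the paper's Step 3. A fixed point argument on the best-response map produces a Nash equilibrium in a \emph{weak} formulation. There the nonconvexity in the state is harmless, because compactness is in law and convexity is needed only in the control variable, where the cost is quadratic and the drift affine. The maximum principle turns this equilibrium into a (weak) solution of \eqref{eq:FBSDE:N}. The uniqueness established in Steps 1--2 then identifies it with $(-Y^1,\dots,-Y^N)$, giving existence, uniqueness and the representation $\alpha^i=-Y^i$ at once. With this replacement you need neither your existence-of-minimizer step nor the single-player uniqueness claim.
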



\subsection{A related control problem}
\label{sec:N-control} 
Despite the $N$-player game introduced above is not of potential type, our aim is to view the empirical  mean $m^N$ associated to the Nash equilibrium as the minimizer of an auxiliary optimal control problem to be determined.
Indeed, the fact that $m^N$ solves the FBSDE \eqref{eq:FBSDE:mean} suggests to search for a stochastic control problem for which \eqref{eq:FBSDE:mean} is the related FBSDE.
To this end, set 
$$
\bar{\xi}^N  = \frac{1}{N} \sum_{i=1}^N \xi^{i}
\quad \text{and}\quad
\bar{W}^N=\frac{1}{\sqrt{N}} \sum_{i=1}^N W^{i},
$$
and consider the optimal control problem which consists in minimizing, over $\beta \in \mathcal A _N$, the cost functional
\begin{equation}
\label{eq:OC:N}
\tag{OC$_{N}$}
\begin{split}
& J _c^N(\beta)
=\mathbb{E} \bigg[ \int_{0}^{T}  \Big( \frac{1}{2}  \left|\beta_{t}\right|^{2} + \frac12 |m_t|^2 + f\left(m_{t} \right)  +\frac{1}{N}  R_f (m_t) \Big) dt  + \frac12 |m_T|^2 + g\left(m_{T}\right) +\frac{1}{N}R_g(m_{T}) \bigg], \\
&\text{subject to} \quad d m_{t}= (bm_t +\beta_{t}) dt +\frac{\sigma}{\sqrt{N}} d \bar{W}^N_{t},  \quad m_0 = \bar \xi^N,
\end{split}
\end{equation}
where the reminders $R_f,R_g$ are defined as
\be
\begin{split}
    R_f (m) & :=  \tfrac12 |\nabla f (m)|^2 + m \cdot \nabla f\left(m \right)-f\left(m\right), \\
    R_g (m) & := \tfrac12 |\nabla g (m)|^2 + m \cdot \nabla g\left(m \right)-g\left(m\right).
\end{split}
\ee
A control $\beta \in \mathcal A _N$ is said to be optimal if $J_c^N(\beta) \leq J_c^N(\beta') $ for any $\beta' \in \mathcal A_N$.
If $\beta$ is optimal, we refer to the associated state process $m$ as the optimal trajectory (related to $\beta$), and to $(\beta, m)$ as an optimal pair.

The following result connects the control problem \eqref{eq:cost:N} to the $N$-player game. 
\begin{thm}
\label{thm: Nash corresponds to minima of OC}
For $\sigma >0$, the following statements hold true:
\begin{enumerate}
    \item There exists a unique optimal control of the control problem \eqref{eq:OC:N}; 
    \item The optimal pair of  \eqref{eq:OC:N} is given by  the empirical mean $(-\eta^N,m^N)$ associated to the unique Nash equilibrium.
\end{enumerate}
\end{thm}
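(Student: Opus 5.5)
The plan is to identify the Nash FBSDE for the empirical mean, \eqref{eq:FBSDE:mean}, with the Pontryagin system of \eqref{eq:OC:N}. The uniqueness in Theorem \ref{lemma existence uniqueness FBSDE.mean} will then give uniqueness of the optimizer. Existence of an optimizer has to be proved separately, because \eqref{eq:OC:N} is not convex in the state.

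\textbf{Step 1: matching the systems.} First I check that the Pontryagin system of \eqref{eq:OC:N} coincides with \eqref{eq:FBSDE:mean}.
\begin{itemize}
\item Noise: $\frac{\sigma}{\sqrt N}\,d\bar W^N_t=\frac{\sigma}{N}\sum_j dW^j_t$, so the forward dynamics agree.
\item Hamiltonian: with $H(m,\beta,y)=(bm+\beta)\cdot y+\frac12|\beta|^2+\frac12|m|^2+f(m)+\frac1N R_f(m)$, minimization in $\beta$ gives $\beta=-y$.
\item Running cost: a direct computation gives $\nabla R_f(m)=\nabla^2 f(m)\nabla f(m)+\nabla f(m)+\nabla^2 f(m)m-\nabla f(m)=\nabla^2 f(m)\big(m+\nabla f(m)\big)$. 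Hence $\partial_m H=b^\tra y+\big(\I_d+\frac1N\nabla^2 f(m)\big)\big(m+\nabla f(m)\big)$, which is the driver of $\eta^N$.
\item Terminal cost: the same computation for $g$ reproduces the terminal condition $\eta^N_T$.
\end{itemize}
So \eqref{eq:FBSDE:mean} is exactly the adjoint system of \eqref{eq:OC:N}, with control $\beta=-\eta$.

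\textbf{Step 2: necessary condition, hence uniqueness.} Let $\beta^*$ be any optimal control with trajectory $m^*$.
\begin{itemize}
\item The control enters linearly in the drift, the diffusion is constant, and the cost is $C^1$ in $m$ with Lipschitz gradient (by the Assumption). So the first-order stochastic maximum principle applies as a necessary condition, without convexity. It is obtained by a spike-free convex perturbation $\beta^*+\epsilon\gamma$ and differentiation of $J_c^N$ at $\epsilon=0$.
\item This yields an adjoint pair $(\eta,\zeta)$ with respect to $\mathbb F^{\bm W,\bm\xi}$ such that $(m^*,\eta,\zeta)$ solves \eqref{eq:FBSDE:mean} and $\beta^*=-\eta$.
\item By Theorem \ref{lemma existence uniqueness FBSDE.mean}, $(m^*,\eta)=(m^N,\eta^N)$.
\end{itemize}
Thus any optimizer equals $(-\eta^N,m^N)$. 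This gives uniqueness in (1) and, once existence is known, statement (2).

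\textbf{Step 3: existence.} This is the main obstacle. I would argue through the HJB equation
\[
\partial_t v+\tfrac{\sigma^2}{2N}\Delta v+bx\cdot\nabla v-\tfrac12|\nabla v|^2+\tfrac12|x|^2+f(x)+\tfrac1N R_f(x)=0,
\qquad v(T,x)=\tfrac12|x|^2+g(x)+\tfrac1N R_g(x).
\]
\begin{itemize}
\item Write $v=\frac12 x^\tra P(t)x+w$, where $P$ solves the (globally solvable) matrix Riccati equation of the purely quadratic part. The remainder $w$ then solves a uniformly parabolic semilinear equation: it has linear drift, its data ($f$, $R_f$, $g$, $R_g$) are Lipschitz with bounded gradients, and the quadratic Hamiltonian is $-\frac12|\nabla w|^2$.
\item Classical theory then gives $w\in C^{1,2}$ with bounded, Lipschitz $\nabla w$. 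The Cole--Hopf transform $w=-\frac{\sigma^2}{N}\log\phi$ linearizes the equation and yields these estimates (after a time-dependent change of variables absorbing the linear drift $b$ and $P$).
\item With these bounds, the feedback $\beta_t=-\nabla v(t,m_t)$ has Lipschitz, linearly growing drift. The closed-loop SDE therefore has a unique strong solution adapted to $\mathbb F^{\bm W,\bm\xi}$.
\item A standard verification argument (Itô's formula on $v(t,m_t)$ for an arbitrary $\beta'\in\mathcal A_N$, using square integrability) shows this feedback is optimal.
\end{itemize}
Step 2 then identifies it with $(-\eta^N,m^N)$.

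Alternatively, I would use the decoupling field $\eta_t=u(t,m_t)$ constructed in the proof of Theorem \ref{lemma existence uniqueness FBSDE.mean}. One shows $u=\nabla v$ and runs the same verification. Either route makes the regularity of the HJB solution the technical heart of the proof.
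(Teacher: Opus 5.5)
Your proposal is correct. Steps 1--2 are exactly the paper's proof: it computes $\nabla F_N,\nabla G_N$, observes that the Pontryagin system of \eqref{eq:OC:N} coincides with \eqref{eq:FBSDE:mean}, and uses Theorem \ref{lemma existence uniqueness FBSDE.mean} to conclude that every optimal control equals $-\eta^N$.

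The difference is Step 3. The paper's proof never establishes that a minimizer exists. It passes directly from ``any optimal control is $-\eta$ for the unique solution of the FBSDE'' to ``the control problem admits a unique optimal control $-\eta$''. As you correctly point out, \eqref{eq:OC:N} is not convex, so the maximum principle is only a necessary condition and by itself gives only \emph{at most one} optimizer. The paper covers existence only implicitly, through the ``standard results'' on smoothness of the value function quoted later in the proposition on \eqref{eq:system_PDE}.

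Your Step 3 makes the argument self-contained. It splits $v=\frac12 x^\tra P(t)x+w$ via the LQ Riccati equation and treats $w$ by Cole--Hopf/Feynman--Kac. This works: after $x=\Gamma(t)y$ with $\dot\Gamma=(b-P)\Gamma$, the diffusion and the quadratic term carry the same matrix $\Gamma^{-1}\Gamma^{-\tra}$. A bounded Hessian of $w$ for fixed $N$ then follows from the tilted-measure formula. The step ends with a verification argument. The verification identity
$J_c^N(\beta')=\E[v(0,\bar\xi^N)]+\frac12\E\int_0^T|\beta'_t+\nabla v(t,m'_t)|^2\,dt$
also gives uniqueness on its own, because the closed-loop SDE has a unique strong solution. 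Step 2 is then needed only to identify the optimizer with the empirical mean of the Nash equilibrium. What the paper's version buys is brevity, at the price of leaving existence to the reader. Yours is complete, but it carries the analytic weight of the HJB regularity.

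Two technical points:
\begin{enumerate}
\item The terminal datum is only $C^{1,1}$, so $w$ is $C^{1,2}$ only on $[0,T)$. You must apply It\^o's formula on $[0,T-\delta]$ and let $\delta\to0$, using continuity of $v$ up to $T$, its quadratic growth, and $\E[\sup_t|m'_t|^2]<\infty$.
\item Your alternative through the decoupling field of Theorem \ref{lemma existence uniqueness FBSDE.mean} presupposes $u=\nabla v$, i.e.\ the same differentiability of $v$. It is therefore not an independent route to existence.
\end{enumerate}
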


\begin{proof} 
The costs of the control problem \eqref{eq:OC:N} are
$$
\begin{aligned}
    F_N(m) &= \frac12 |m|^2 + f(m)+\frac{1}{N} R_f (m ),\\
    G_N(m) &= \frac12 |m|^2 + g(m)+\frac{1}{N} R_g (m ),
\end{aligned}
$$
and computing the gradients in $m$ we find
$$
\begin{aligned}
\nabla F_N(m)&=
\Big( \I_d + \frac{1}{N} \nabla^{2} f(m) \Big) \Big( m +  \nabla f (m) \Big), \\
\nabla G_N(m)&=
\Big( \I_d + \frac{1}{N} \nabla^{2} g(m) \Big) \Big( m +  \nabla g (m) \Big).
\end{aligned}
$$
By the stochastic maximum principle, any optimal control of the control problem \eqref{eq:OC:N}, writes as $-\eta$ for some solution $(m,\eta, \zeta )$ to  the FBSDE
\be\label{eq:FBSDE.mean.niceFG}
 \left\{\begin{array}{l}
 d m_{t} = (b m_t -\eta_{t}) d t+ \sigma d \bar W ^N_t
 \quad m_{0}= \bar \xi ^N, \\ 
d \eta_{t} =-[ b^\tra \eta_t + \nabla F_N \left( m_{t} \right) ] dt +\sum_{j} \zeta_{t}^{j} d W_{t}^{j}, \quad \eta_{T}=\nabla G_N \left(m_{T}\right).
\end{array}\right.
\ee
Such a FBSDE coincides with the FBSDE \eqref{eq:FBSDE:mean}, hence it admits a unique solution $(m,\eta, \zeta )$  by Theorem \ref{lemma existence uniqueness FBSDE.mean}.
Thus the control problem \eqref{eq:OC:N} admits a unique optimal control $-\eta$, proving $(1)$.

To show $(2)$, it is enough to notice that the  FBSDE
\eqref{eq:FBSDE.mean.niceFG} coincides with the FBSDE \eqref{eq:FBSDE:mean}, for which $(m^N,\eta^N, \zeta^{N})$ (related to the mean of the Nash equilibrium) is the unique solution. 
\end{proof} 

We also study convergence of the decoupling field of system \eqref{eq:FBSDE:mean}, that is, the function $u^N:[0,T]\times \R^d \rightarrow \R^d$ such that $\eta^N_t= u^N(t, m^N_t)$; then $u^N =(u^N_1,\dots, u^N_d)$ solves the following system of PDEs:
\begin{equation}
\label{eq:system_PDE}
 \begin{cases} 
    -\partial_t u_i - \frac{\sigma^2}{2N}  \Delta_{m} u_i 
    - \big(  b m - u) \cdot \nabla_{m} u_i - b u_i = \frac{\partial F_N}{\partial m_i}(m) ,  & \\
     u_i(T,m)= \frac{\partial G_N}{\partial m_i}(m). &
\end{cases}
\end{equation}

We remark that uniqueness of classical solutions to parabolic systems of PDEs is true in case of bounded coefficients, and not know for general systems with linear growth coefficients; observe that $\frac{\partial F_N}{\partial m_i}(m)$ and $\frac{\partial G_N}{\partial m_i}(m)$ have linear growth in $m$. We thus rely on the uniqueness result of Theorem \ref{lemma existence uniqueness FBSDE.mean}.

\begin{prop}
For $\sigma>0$, there exists a unique solution to system \eqref{eq:system_PDE} in the space of linear growth vectors in $C^{1,2}([0,T)\times \R^d; \R^d) \cap C([0,T]\times \R^d; \R^d)$.
\end{prop} 

\begin{proof} 
By standard results the value function of the control problem \eqref{eq:OC:N} is smooth and, thanks to the interior regularity estimates (e.g. \cite[Theorem 8.12.1]{krylov1996lectures}), its derivative is a classical solution  to system \eqref{eq:system_PDE}. Notice that under our regularity assumptions, the derivative is not $C^2$ up to the terminal time.  Uniqueness of classical solutions is then a consequence of It\^o's formula and uniqueness of \eqref{eq:FBSDE:mean}. Indeed, given a classical solution $u$, let $m$ be the solution to the forward SDE with initial point $(t_0, \nu_0)$, whereas $\eta_t$ is replaced by the function  $u(t, m_t)$; such SDE admits a unique strong solution since $\sigma>0$ and $u$ has linear growth. 
Thus, letting $\eta_t = u(t, m_t)$, the process $(\eta,m)$ solves the FBSDE  \eqref{eq:FBSDE:mean} and thus the uniqueness result Theorem \ref{lemma existence uniqueness FBSDE.mean} yields that $u(t_0, \nu_0) = \eta_{t_0}$ is uniquely determined for any $(t_0, \nu_0)$. 
\end{proof} 

\subsection{Potential mean field game}
\label{sec:potentialMFG}
Suppose the probability space $(\Omega, \mathcal F , \mathbb P)$ to be rich enough to accommodate another $d$-dimensional Brownian motion $W$ and an independent square integrable $\mathbb R^d$-valued random variable $\xi$.
Denote by $\mathcal A$ the space of square integrable $\mathbb R ^d$-valued processes which are progressively measurable with respect to the right continuous extension of the filtration generated by $W$ and $\xi$, augmented by $\mathbb P$-null sets. 

Consider the  mean field game in which, for any continuous for $m:[0, T] \rightarrow \mathbb{R}^{d}$, the representative players minimizes, over the controls $\alpha \in \mathcal A$, the cost functional
\be\label{eq:MFG.problem}
\begin{aligned}
    & J(\alpha, m)=\mathbb{E}\bigg[\int_{0}^{T} \frac{1}{2} \big( |\alpha_{t}|^{2}+ ( X_{t}  + \nabla f(m_{t}) ) ^2 \big) d t + \frac12 ( X_{T} +  \nabla g(m_{T}) ) ^2\bigg], \\
    & \text{subject to} \quad d X_{t}= (b X_t + \alpha_{t} )d t + \sigma d W_{t}, \quad X_0 = \xi.
\end{aligned}
\ee
A mean field equilibrium is a couple $(\alpha, m)$ such that $\alpha$ is optimal for $J(\cdot, m)$ over $\mathcal A$, with corresponding optimal trajectory $X$, and $m_{t}=\mathbb{E}\left[X_{t}\right]$ for any $t$. 

\begin{remark}
We warn the reader that this mean field game is potential, by considering the usual definition; see e.g. \cite[Vol I, \S 6.7.2]{CarmonaDelarue18}. Indeed, we could write the costs as functions of the measure, and show that the MFG is the flat derivative of the mean field control problem. We do not introduce these notions since they are not used in this paper. 
\end{remark}

By the stochastic maximum principle (see \cite{CarmonaDelarue18}), mean field equilibria can be characterized in terms of solutions $(X,Y,Z)$ of the related McKean-Vlasov FBSDE
\be
\label{eq:MFG.FBSDE}
\left\{\begin{array}{l}
d X_{t}= (b X_t -Y_{t}) d t + \sigma d W_{t}, \quad X_0 = \xi,  \\
d Y_{t}=- [ b^\tra Y_t + X_t + \nabla f\left(\mathbb{E}\left[X_{t}\right]\right) ] dt +Z_{t} d W_{t}, \quad 
Y_{T}=X_T + \nabla g\left(\mathbb{E}\left[X_{T}\right]\right).
\end{array}\right.
\ee 
Indeed, $(\alpha, m)$ is a mean field  equilibrium if and only if $\alpha = -Y$ for some solution $(X,Y,Z)$  the  McKean-Vlasov FBSDE \eqref{eq:MFG.FBSDE}. 
Moreover, for a solution $(X,Y,Z)$, setting $m_{t}=\mathbb{E}\left[X_{t}\right]$ and $\eta_{t}=\mathbb{E}\left[Y_{t}\right]$, by taking expectations in the system we see that $(m,\eta)$ solves the forward-backward system of ODE's
\be
\label{eq:OC.FBSDE}
\tag{OC-FBODE}
\left\{\begin{array}{l}
\dot{m}_{t}= b m_t-\eta_{t}, \quad m_0 = \mathbb E [\xi],   \\
\dot{\eta}_{t}=-[ b^\tra \eta_t + m_t + \nabla f\left(m_{t}\right)], \quad \eta_{T}=m_T+\nabla g\left(m_{T}\right) .
\end{array}\right.
\ee

Given initial conditions $(t_0,\nu_0) \in [0,T] \times \mathbb R ^d$, consider now the deterministic optimal control problem which consists in minimizing, over deterministic square integrable functions $\beta:[0,T] \to \mathbb R ^d$, the cost functional
\be
\label{eq:OC}
\tag{OC}
\begin{aligned}
    & J_c(t_0,\nu_0;\beta)=\int_{t_0}^{T}\left(\frac{1}{2}\left|\beta_{t}\right|^{2} + \frac{1}{2}\left|m_{t}\right|^{2}+ f\left(m_{t}\right)\right) d t
+ \frac{1}{2}\left|m_{T}\right|^{2} 
+  g\left(m_{T}\right), \\
&\text{subject to} \quad \dot{m}_{t}= b m_t + \beta_{t}, \quad m_{t_0} = \nu_0. 
\end{aligned}
\ee
Unless otherwise stated, we will set $J_c(\beta) = J_c(0,\nu_0;\beta)$, when $\nu_0$ is understood. 
We observe that, for $t_0=0$ and $\nu_0 = \mathbb E [\xi]$,  \eqref{eq:OC.FBSDE} is the system given by the Pontryagin maximum principle for the above control problem. 
We denote the value function by $v(t_0, \nu_0)$; i.e., set $v(t_0, \nu_0) =  \inf_\beta J_c (t_0, \nu_0; \beta)$.

The following is a classical result in deterministic control theory; see \cite{cannarsa&sinestrani2004}. 
\begin{prop}
\label{prop:optimal controls of OC}
The following hold:
\begin{itemize} 
\item[(i)] For any initial condition $(t_0,\nu_0) \in [0,T] \times \mathbb R ^d$, there exists an optimal control for \eqref{eq:OC};
\item[(ii)] If $(t_0,\nu_0) = (0,\E [\xi])$, any minimizer $(m, \beta)$ is such that $(m,\eta)$ solves \eqref{eq:OC.FBSDE} with $\beta=-\eta$;
\item[(iii)] The value function is locally Lipschitz and locally semiconcave in $\nu_0$. 
\item[(iv)] The optimal trajectory is unique for the control problem with initial condition $\nu_0$ at time $t_{0}$ if and only if $v(t_0,\cdot)$ is differentiable in $\nu_0$ - and thus for almost every $\nu_0$. \\
In this case, system \eqref{eq:OC.FBSDE} is uniquely solvable on $[t_0, T]$ and $\eta_{t_0} = \nabla_m v(t_0, \nu_0)$. 
\end{itemize}
\end{prop}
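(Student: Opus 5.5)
The plan is to treat \eqref{eq:OC} as a standard finite-horizon Bolza problem and check the four items in order. The key observations are that the dynamics are linear, the running cost is $\frac12|\beta|^2$ plus a $C^2$ function of the state, and by the standing assumption $\nabla f,\nabla g,\nabla^2 f,\nabla^2 g$ are bounded, so $\frac12|m|^2+f(m)$ and $\frac12|m|^2+g(m)$ are bounded below (they grow quadratically, since $f,g$ have at most linear growth).

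For (i) we use the direct method. Take a minimizing sequence $\beta^n$. Comparing with $\beta\equiv0$ bounds $J_c(t_0,\nu_0;\beta^n)$ above, and the lower bound on the state costs then gives $\sup_n\|\beta^n\|_{L^2}<\infty$. Extract $\beta^n\rightharpoonup\beta$ weakly in $L^2$. The variation-of-constants formula $m_t=e^{b(t-t_0)}\nu_0+\int_{t_0}^t e^{b(t-s)}\beta_s\,ds$ shows that the trajectories are equi-Lipschitz in $H^1$ and converge uniformly to the trajectory of $\beta$. Weak lower semicontinuity of $\|\cdot\|_{L^2}^2$ and continuity of the state costs then yield optimality of $\beta$.

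For (ii) we perturb an optimal $\beta$ by $\epsilon\gamma$ and differentiate at $\epsilon=0$, which is legitimate since $f,g\in C^1$ with Lipschitz gradients. Introduce the adjoint $\eta$, defined by the backward ODE in \eqref{eq:OC.FBSDE} along the optimal $m$. Integrating $\frac{d}{dt}(\eta\cdot\delta m)$ by parts turns the first variation into $\int(\beta_t+\eta_t)\cdot\gamma_t\,dt=0$ for all $\gamma$. Hence $\beta=-\eta$, and $(m,\eta)$ solves \eqref{eq:OC.FBSDE}.

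For (iii) we rely on the fact that minimizers from a bounded set of $\nu_0$ have uniformly bounded controls, by the same a priori bound as in (i). Comparing costs with the control that uses the optimal $\beta$ started from a nearby point (same control, shifted initial condition, which shifts $m$ by $e^{b(t-t_0)}(\nu_0'-\nu_0)$) gives local Lipschitz continuity. The same shift applied at $\nu_0\pm h$ with fixed $\beta$ gives
\[
v(t_0,\nu_0+h)+v(t_0,\nu_0-h)-2v(t_0,\nu_0)\le C|h|^2,
\]
because the cost is $C^{1,1}$ in the state, locally uniformly. This is local semiconcavity.

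Item (iv) is the main obstacle and follows the Cannarsa--Sinestrari scheme. First, for any optimal trajectory from $\nu_0$, the adjoint $\eta_{t_0}$ belongs to the superdifferential $D^+v(t_0,\nu_0)$; this follows from the same shifting argument, with first-order expansion. If $v$ is differentiable at $\nu_0$, then every optimal pair has $\eta_{t_0}=\nabla_m v(t_0,\nu_0)$. The pair $(m,\eta)$ solves the forward ODE system \eqref{eq:OC.FBSDE}, a Cauchy problem with Lipschitz coefficients and data $(\nu_0,\eta_{t_0})$ at $t_0$, so it is unique, and the optimal trajectory is unique.

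Conversely, suppose the optimizer is unique. Every reachable gradient of $v$ at $\nu_0$ is a limit of $\nabla v(t_0,\nu_n)$ with $\nu_n\to\nu_0$ differentiability points. By compactness from (i) and stability, the corresponding optimal trajectories converge to an optimizer from $\nu_0$, hence to the unique one, so all reachable gradients coincide. Since $D^+v$ of a semiconcave function is the convex hull of the reachable gradients, $D^+v(t_0,\nu_0)$ is a singleton, which means $v$ is differentiable there. Rademacher's theorem applied to the locally Lipschitz $v(t_0,\cdot)$ gives differentiability for almost every $\nu_0$.

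For the last claim of (iv), at such a $\nu_0$ any solution of \eqref{eq:OC.FBSDE} on $[t_0,T]$ must be shown to be optimal. We expect this to be the delicate part of (iv), since \eqref{eq:OC.FBSDE} is only a necessary condition. The plan is to argue via the characteristics of the Hamilton--Jacobi equation, as in \cite{cannarsa&sinestrani2004}: where $v$ is differentiable, uniqueness of the backward characteristic forces solutions of the FBODE to coincide with the unique optimal pair. This last point is the part we are least sure of in full generality, and we would rely on the cited reference for it.
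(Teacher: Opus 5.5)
\textbf{What is fine, and how it differs from the paper.} Your arguments for (i)--(iii) are correct, and so is your proof that uniqueness of the optimal trajectory is equivalent to differentiability of $v(t_0,\cdot)$ at $\nu_0$. For that equivalence your route is more self-contained than the paper's. The paper first reduces to bounded controls via the a priori bound on solutions of \eqref{eq:OC.FBSDE}. It then cites Cannarsa--Sinestrari (Thm.~7.4.20) at points of joint differentiability in $(t_0,\nu_0)$, and needs Cannarsa--Frankowska to pass from differentiability in $\nu_0$ alone to joint differentiability. You work in the space variable only: $\eta_{t_0}\in D^+v(t_0,\nu_0)$ by shifting the initial point, $D^+v=\mathrm{co}\,D^*v$ for semiconcave functions, and stability of minimizers. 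This needs neither the time variable nor a compact control set.

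\textbf{The gap: the final claim of (iv).} You plan to show that, at a differentiability point, every solution of \eqref{eq:OC.FBSDE} on $[t_0,T]$ is optimal. This is false, and so is the literal claim that \eqref{eq:OC.FBSDE} is uniquely solvable there. The paper also only cites Cannarsa--Sinestrari at this point, so deferring to the reference does not rescue it.

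Take $d=1$, $b=0$, $f=0$, $g(y)=-K\sqrt{1+y^2}$, which satisfies the standing assumption. The interior equations are linear, and solutions of \eqref{eq:OC.FBSDE} from $(0,\nu_0)$ correspond to terminal points $y=m_T$ solving $\psi(y):=e^Ty+\sinh T\,g'(y)=\nu_0$. The function $\psi$ is odd, with $\psi(y)\to\pm\infty$ as $y\to\pm\infty$, and $\psi'(0)=e^T-K\sinh T<0$ for $K$ large. Hence there is $y_1>0$ with $\psi(y_1)<0$. For every $|\nu_0|<|\psi(y_1)|$ the equation then has a root in each of $(-\infty,-y_1)$, $(-y_1,y_1)$ and $(y_1,\infty)$, so \eqref{eq:OC.FBSDE} has at least three solutions. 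Yet $v(0,\cdot)$ is differentiable at a.e.\ such $\nu_0$, by (iii) and Rademacher. So the unique minimizer coexists with non-optimal extremals. This is the same phenomenon as the non-minimizing equilibrium $(0,0)$ of Section~\ref{sec:constant}, which does not disappear when $\nu_0$ is moved slightly off $0$.

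Uniqueness of characteristics only gives uniqueness for the Cauchy problem with data $(m_{t_0},\eta_{t_0})=(\nu_0,\nabla_m v(t_0,\nu_0))$, not for the two-point problem. The correct and provable claim is therefore: the optimal pair is the unique solution of \eqref{eq:OC.FBSDE} satisfying $\eta_{t_0}=\nabla_m v(t_0,\nu_0)$. Your forward implication already proves this, so you should state that and drop the characteristics argument. It is also all the paper uses later: item (4) of Theorem~\ref{thm:convergence} only needs $\eta_{t_0}=\nabla_m v(t_0,\nu_0)$ along the unique optimal pair.
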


\begin{proof}
We first observe that, 
for a given (deterministic) initial condition $\nu_0$, the infimum in the control does not change if we restrict to  controls $\beta$ such that
\[
\int_0^T |\beta_t|^2 dt \leq C(1+|\nu_0|^2), 
\]
for a given constant $C$. 
This follows by the bound 
\[
\int_0^T \big(\tfrac12|\beta_t|^2 - C|\beta_t|\big) dt - C(1+|\nu_0|) 
\leq
J(\beta)\leq J(0)\leq C(1+|\nu_0|^2),
\] 
which is obtained by using the estimate 
\(
\sup_{0\leq t\leq T}|m_t| \leq |\nu_0| +C\int_0^T |\beta_t|dt 
\), and the constant zero control for the upper bound. Therefore existence of an optimal control can be proved by taking a minimizing subsequence converging in the weak topology of $\mathbb{L}^2([0,T], \R^d)$. 
This proves claim (i), 
while the proof of (ii) follows by standard arguments. 
Having a solution to \eqref{eq:OC.FBSDE}, we also get 
\[
\sup_{0\leq t\leq T}|m_t| + \sup_{0\leq t\leq T}|\eta_t|  \leq C(1+ |\nu_0|) ,  
\] 
which implies that, if $\nu_0$ belongs to a ball of radius $R$, then the control problem can be restricted to controls bounded by a constant depending on $R$. 
Thus (iii) follows by usual arguments.  
Boundedness of controls permits to apply \cite[Thm. 7.4.20]{cannarsa&sinestrani2004} which provides claim (iv) in case the value function is differentiable in $(t_0,\nu_0)$. 
The fact that differentiability of $v(t_0, \cdot)$ in $\nu_0$ implies joint differentiability of $v$ in $(t_0,\nu_0)$ is proved in \cite{cannarsa&frankowska}. 
The value functions is also shown to be locally Lipschitz and semiconcave jointly in $(t_0,\nu_0)$. 
\end{proof}

\section{$N$-player convergence result}
\label{sec:N-convergence} 
\label{section N player convergence result}
In light of Proposition \ref{prop:optimal controls of OC}, minimizes of \eqref{eq:OC} with initial condition $(t_0,\nu_0) = (0,\E [\xi])$ are mean field equilibria. 
However, there are mean field equilibria which are not minimizes of \eqref{eq:OC}, but are just stationary points.  
We show that Nash equilibria of the $N$-player game converge to minimizes of \eqref{eq:OC}, thus ruling out mean field equilibria which are not minimizes. 

To phrase our main result, consider the space of continuous functions $C ([0, T] ; \mathbb{R}^{ d} )$ endowed with the supremum norm, and the space of càdlàg functions $D([0,T]; \R^d)$ endowed with the pseudo-path topology; that is, the topology of the convergence in the measure $dt + \delta_T$, where $dt$ is the Lebesgue measure on $[0,T]$ and $\delta_T$ is the Dirac delta in $T$.

We have the following selection principle. 
\begin{thm}
\label{thm:convergence}
Let $\bar \xi ^N= \frac1N\sum_j \xi^i$ converge to $\E[\xi]$ in $L^2(\Omega)$.
\footnote{
Recall that this is equivalent to require that $\bar \xi ^N$ converge in distribution and the second moments converge.
}
If  $\sigma >0$, then  
\begin{enumerate}
\item\label{thm:convergence1} The sequence $ (m^{N}, \eta^{N} )$ is tight on $C ([0, T] ; \mathbb{R}^{ d} ) \times D([0,T]; \R^d)$, endowed with the (product) topology of uniform convergence and pseudo-path topology. 

\item\label{thm:convergence2} Any limit point $(m, \eta)$ in distribution is supported on (deterministic) minimizes of \eqref{eq:OC} with initial conditions $ (0, \E[\xi] )$.

\item\label{thm:convergence3}  The sequence $ (m^{N}, \eta^{N} )$ truly converges to the unique (deterministic) minimizer of \eqref{eq:OC} if and only if $v$ is differentiable in $ (0, \E[\xi] )$.

\item\label{thm:convergence4}
We have $\lim_N u^N (t_0, \nu_0) = \nabla_m v(t_0, \nu_0)$ at any point $t_0$ and any $\nu_0$ in which $m \mapsto v(t_0,m)$ is differentiable (thus for any $t_0$ and almost every $\nu_0$).

\end{enumerate}
\end{thm}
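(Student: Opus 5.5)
The plan is to use the variational characterization of Theorem \ref{thm: Nash corresponds to minima of OC}: $(-\eta^N,m^N)$ is the optimal pair of \eqref{eq:OC:N}. The argument passes to the limit in the optimality inequality $J^N_c(-\eta^N)\le J^N_c(\beta')$ for well-chosen competitors $\beta'$.

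\emph{Step 1: uniform a priori bounds.} Testing optimality against $\beta'=0$ gives $J_c^N(-\eta^N)\le C(1+\E|\bar\xi^N|^2)$, which is bounded uniformly in $N$. The functions $f$, $g$ have at most linear growth and $R_f$, $R_g$ at most linear growth (because $\nabla f$, $\nabla g$ are bounded). Hence, as in the proof of Proposition \ref{prop:optimal controls of OC}, we obtain $\sup_N\E\int_0^T|\eta^N_t|^2dt<\infty$ and $\sup_N\E\sup_t|m^N_t|^2<\infty$. For tightness of $m^N$ in $C([0,T];\R^d)$, I write $m^N_t=\bar\xi^N+\int_0^t(bm^N_s-\eta^N_s)ds+\sigma\bar W^N_t/\sqrt N$. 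The drift integral is $1/2$-Hölder with an $L^2$-bounded constant, and the martingale term vanishes in $L^2(\Omega;C)$, so Arzelà--Ascoli applies. For $\eta^N$ in the pseudo-path (Meyer--Zheng) topology, I use the backward equation: $\eta^N$ is a semimartingale whose finite-variation part has bounded expected total variation (the driver has linear growth). Its terminal value $\nabla G_N(m^N_T)$ is $L^2$-bounded, so the conditional variation is uniformly bounded and the Meyer--Zheng criterion gives tightness. This yields (\ref{thm:convergence1}).

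\emph{Step 2: identification of limits.} I pass to a Skorokhod representation of a convergent subsequence $(m^N,\eta^N)\to(m,\eta)$, with $\bar\xi^N\to\E[\xi]$ so that $m_0=\E[\xi]$. Meyer--Zheng convergence means convergence in $dt$-measure together with convergence at $T$, and with the $L^2$ bounds this gives $\eta^N\to\eta$ weakly in $L^2([0,T])$ a.s. along the representation. Passing to the limit in the forward equation then gives $\dot m=bm-\eta$ pathwise. Next I take the competitor: for any deterministic $\beta\in L^2$, let $\tilde m^N$ be the state of \eqref{eq:OC:N} driven by $\beta$. Then $\tilde m^N\to\tilde m$, the deterministic trajectory of \eqref{eq:OC} from $\E[\xi]$, and since the $1/N$ remainders vanish, $J^N_c(\beta)\to J_c(\beta)$. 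On the other side, by weak lower semicontinuity of $\int|\beta|^2$, Fatou, uniform convergence of $m^N$, and $\frac1N R_f(m^N)\to0$ in $L^1$, we get $\E[J_c(-\eta)]\le\liminf J_c^N(-\eta^N)\le J_c(\beta)$ for every $\beta$. Hence $\E[J_c(-\eta)]\le v(0,\E[\xi])$. Since $J_c(-\eta)\ge v(0,\E[\xi])$ pathwise, equality holds a.s., and a.s.\ $(m,-\eta)$ is a minimizer. This proves (\ref{thm:convergence2}). The main technical obstacle I expect here is this lower semicontinuity step with a random limit. I would handle it with a measurable (pathwise) version of Fatou under the Skorokhod coupling, using $\E|\eta^N|^2$ bounds to get a.s.\ weak $L^2$ compactness along a further subsequence.

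\emph{Step 3: uniqueness and decoupling fields.} If $v$ is differentiable at $(0,\E[\xi])$, then by Proposition \ref{prop:optimal controls of OC}(iv) the minimizer is unique. Every limit point is then the same Dirac mass, so the full sequence converges in distribution to a deterministic limit, and hence in probability. Conversely, if the whole sequence converges, a symmetry or perturbation argument is needed. I would show that every minimizer is charged by perturbing the initial datum $\E[\xi]$ along directions where $v$ is differentiable; this direction is what I expect to be most delicate, and I would rely on the semiconcavity from (iii). For (\ref{thm:convergence4}), I apply the same argument with deterministic initial data $(t_0,\nu_0)$ and $\xi^i\equiv\nu_0$. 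Then $u^N(t_0,\nu_0)=\eta^N_{t_0}$ is deterministic, the backward equation and the pseudo-path convergence at $t_0$ (via right-continuity and bounds) give $\eta^N_{t_0}\to\eta_{t_0}$, and uniqueness plus Proposition \ref{prop:optimal controls of OC}(iv) gives $\eta_{t_0}=\nabla_m v(t_0,\nu_0)$.
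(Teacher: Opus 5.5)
There is a genuine gap in the converse of part (3); the rest is sound. Your treatment of (1), (2) and the ``if'' half of (3) is essentially the paper's argument: a priori bounds from $J^N_c(-\eta^N)\le J^N_c(0)$, Meyer--Zheng for $\eta^N$, Skorokhod representation, Fatou, and deterministic competitors with $J^N_c(\beta)\to J_c(\beta)$. Using Arzel\`a--Ascoli instead of Aldous' criterion for $m^N$ is a harmless substitute.

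\textbf{The converse in (3).} The phrase ``converges to the unique minimizer'' already asserts that \eqref{eq:OC} from $(0,\E[\xi])$ has a unique optimal trajectory. Differentiability of $v$ there is then exactly Proposition \ref{prop:optimal controls of OC}(iv), and that is all the paper uses. The route you propose instead would fail: showing that every minimizer is charged by the limit law, by perturbing the initial datum towards points of differentiability.
\begin{itemize}
\item Part (2) gives no lower bound on the mass of individual minimizers.
\item Limits taken at nearby initial data (with $N\to\infty$ at fixed $\nu\neq\E[\xi]$) say nothing about how the limit law at $\E[\xi]$ splits, since the two limits do not commute. The hypotheses even allow $\bar\xi^N\equiv\E[\xi]$.
\item The paper only identifies the weights under symmetry (Theorems \ref{prop case two minima general} and \ref{prop multidimensional}). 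Without symmetry it only gets that \emph{some} minimizer is selected with positive probability (Proposition \ref{prop condition g}).
\end{itemize}
Nothing in the method rules out a deterministic limit concentrated on one of several minimizers. So ``convergence of the whole sequence forces differentiability of $v$'' is neither needed nor something this approach can deliver. Drop that step and use the immediate reading.

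\textbf{Two smaller points.} In Step 2, the bound $\sup_N\E\int_0^T|\eta^N_t|^2dt<\infty$ holds only in expectation. So pathwise weak $L^2$ convergence along the full sequence is not available. Do as the paper does instead. Almost-sure convergence in $dt$-measure plus uniform integrability give $\tilde\E\int_0^T|\tilde\eta^N_t-\tilde\eta_t|\,dt\to0$, which suffices to pass to the limit in the forward equation. Fatou along $dt$-a.e.\ convergent subsequences gives the lower semicontinuity of $\int_0^T|\eta_t|^2dt$.

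In (4), pseudo-path convergence carries no information at a fixed $t_0<T$. What makes your sketch work is that $\eta^N_{t_0}$ is deterministic, so
$\eta^N_{t_0}=\E\big[\nabla G_N(m^N_T)+\int_{t_0}^T(b^\tra\eta^N_s+\nabla F_N(m^N_s))\,ds\big]$.
The right-hand side converges to the same expression for the unique solution of \eqref{eq:OC.FBSDE} on $[t_0,T]$, i.e.\ to $\nabla_m v(t_0,\nu_0)$. This uses the $L^1(d\tilde{\mathbb P}\otimes dt)$ convergence of $\eta^N$, uniform convergence and uniform integrability of $m^N$, and $|\nabla F_N(m)-m-\nabla f(m)|\le C(1+|m|)/N$ (similarly for $G_N$). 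Written this way, your route to (4) is simpler than the paper's It\^o--Tanaka/Gr\"onwall estimate on $\tilde\E|\tilde\eta^N_t-\tilde\eta_t|$.
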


\begin{proof}
    We prove each claim separately. 
    \smallbreak\noindent
    \emph{Proof of \ref{thm:convergence1}.} 
    Consider the constant  $0$ control and denote by $m^0$ the related controlled state process. 
    By Theorem \ref{thm: Nash corresponds to minima of OC}, the pair $(m^N, -\eta^N)$ is  optimal for the control problem \eqref{eq:OC:N}. 
    Thus, expanding the inequality $J_c^N (-\eta^N) \leq J_c^N (0)$ and using the growth condition on $f, \nabla f, g, \nabla g$ we find
    $$
    \begin{aligned}
        &\mathbb{E} \bigg[ \int_{0}^{T}  \Big( \frac12  | \eta^N_{t}|^{2} + \frac12 |m^N_t|^2 - C |m^N_{t}|  \Big) dt  + \frac12 |m^N_T|^2 - C |m^N_{T}| \bigg] -C \\
       & \quad \leq 
        \mathbb{E} \bigg[ \int_{0}^{T}  \Big(   \frac12 |m^0_t|^2 + C |m^0_{t}|  \Big) dt  + \frac12 |m^0_T|^2 + C |m^0_{T}| \bigg] + C.
    \end{aligned}
    $$
The convergence of $\bar \xi ^N$ and classical Lipschitz estimates allow to find a constant $C$ such that
    $$
    \mathbb E \bigg[ \sup_{t \in [0,T]} |m^0_t|^2 \bigg] \leq C
    \quad \text{and} \quad
    \mathbb E \bigg[ \sup_{t \in [0,T]} |m^N_t|\bigg] \leq C \mathbb E \bigg[ \int_0^T |\eta^N_t| dt \bigg].
    $$
    Hence, the terms in the previous inequality can be rearranged as
    $$
    \begin{aligned}
     \mathbb{E} \bigg[ \int_{0}^{T}   \frac12  | \eta^N_{t}|^{2}  dt  \bigg] 
       &  \leq C 
        \mathbb{E} \bigg[ \int_{0}^{T}  \Big(   \frac12 |m^0_t|^2 +  |m^0_{t}|  + |m^N_t|  \Big) dt  + \frac12 |m^0_T|^2 +  |m^0_{T}| + |m^N_T| \bigg] 
        \\
       &  \leq C \bigg( 1 + \mathbb E \bigg[ \int_0^T |\eta^N_t| dt \bigg] \bigg),
    \end{aligned}
    $$
    which in turn implies that
    \begin{equation}
    \label{eq: a priori estimate}
    \sup_N \mathbb{E} \bigg[ \int_{0}^{T}    | \eta^N_{t}|^{2}  dt  \bigg] < \infty
     \quad \text{and} \quad
     \sup_N  \mathbb E \bigg[ \sup_{t \in [0,T]} |m^N_t|^2 \bigg] < \infty,
    \end{equation}
    where the second estimate follows from the first one. 
    
    Next, we want to show that $(m^N)_N$ is tight on $C([0,T], \R^d)$. 
    Let $h>0$, and let $\mathcal T_h$ be the set of $[0,T]$--valued stopping times $\tau$ such that $ \tau +h\le T$.
  For every $\tau\in \mathcal T_h$, it follows by H\"older's inequality  that 
  $$
  \begin{aligned}
    \E[|m^N_{\tau+h} - m^N_\tau|] 
    &\le \E\bigg[\int_{\tau}^{\tau+h}|b m^N_s  - \eta^N_s | ds \bigg] + \frac1{\sqrt N} |\sigma| \E[|\bar W_{\tau+h}^N - \bar  W_{\tau}^N|]\\
    &\le\sqrt h \E\bigg[\int_{\tau}^{\tau+h}(|b|^2 |m^N_s|^2  +| \eta^N_s|^2 ) ds \bigg] ^{\frac12} + |\sigma|\sqrt{h}\\
    & \le C\sqrt{h} , 
  \end{aligned}
  $$
  where the last inequality follows from \eqref{eq: a priori estimate}.
  It follows by Aldous' tightness criterion \cite[Theorem 16.9]{billingsley1999convergence} that the sequence $(m^N)_N$ is tight on $D([0,T]; \R^d)$ with the Skorohod $J_1$ topology, and thus also on $C([0,T]; \R^d)$ with the topology of uniform convergence.

In order to show the tightness of the controls $(\eta^N)_N$, we employ the FBSDE and the Meyer-Zheng tightness criterion. 
Let $\Pi$ be the set of partitions  $\pi  =\{0=t_0, t_1,\dots, t_{n_\pi} = T\}$ of $n_\pi \in \mathbb N$ points $t_i \in [0,T]$ with $t_i < t_{i+1}$.
For $\pi \in \Pi$, set 
$$
V_\pi(\eta^N):= \sum_{i=0}^{n_\pi-1} \big| \mathbb E [ \eta^N_{t_{i+1}} -  \eta^N_{t_i}| \mathcal F_{t_i} ] \big| + \mathbb E [ |\eta_T^N| ].
$$ 
The FBSDE representation of $\eta^N$ as in \eqref{eq:FBSDE:mean} (with $F_N, G_N$ as in \eqref{eq:FBSDE.mean.niceFG}) gives 
$$
\begin{aligned}
V_\pi(\eta^N) & = \sum_{i=0}^{n_\pi-1} \bigg| \mathbb E \bigg[  \int_{t_i}^{t_{i+1}}[ b^\tra \eta^N_t + \nabla F_N (m_{t}^{N}) ] dt +\sum_{j} \zeta_{t}^{j,N} d W_{t}^{j}     \bigg| \mathcal F_{t_i} \bigg] \bigg| + \mathbb E[ | \nabla G_N (m_T^N)|],
\end{aligned}
$$
and using the square integrability of $\zeta^{j,N}$ and the growth conditions on $f,g$, we find
$$
\begin{aligned}
\E [V_\pi(\eta^N) ] & \leq \mathbb E \bigg[     \int_{0}^{T} \big| b^\tra \eta^N_t + \nabla F_N (m_{t}^{N})\big| dt   + | \nabla G_N (m_T^N)| \bigg] \\
& \leq  C \bigg( 1 + \mathbb{E} \bigg[ \int_{0}^{T}    | \eta^N_{t}| dt  + \sup_{t \in [0,T]} |m^N_t| \bigg] \bigg).
\end{aligned}
$$
Thus, by using  the estimates in \eqref{eq: a priori estimate}, we obtain
$$
\sup_N \sup_{\pi \in \Pi} \E [V_\pi(\eta^N) ] 
\leq C \bigg( 1 + \mathbb{E} \bigg[ \int_{0}^{T}    | \eta^N_{t}|^{2} dt  + \sup_{t \in [0,T]} |m^N_t|^2 \bigg] \bigg)< \infty. 
$$
The Meyer-Zheng tightness criterion (see \cite[Theorem 3.9, Volume II]{CarmonaDelarue18}) implies that the sequence $(\eta^N)_N$ is tight on the space of càdlàg function $D ([0,T]; \mathbb R ^d )$, endowed with the convergence in the measure $dt + \delta _T$, where $\delta_T$ denotes the Dirac's delta in $T$. 

Therefore, we conclude that the sequence $(m^N,\eta^N)$ is tight in $C ([0,T]; \mathbb R ^d ) \times D ([0,T]; \mathbb R ^d )$ endowed with the product topology. 

\smallbreak\noindent
\emph{Proof of \ref{thm:convergence2}.} 
Let the process $(m,\eta)$ be a limit point in distribution of the sequence $(m^N,\eta^N)$.
By Skorokhod representation theorem, we can find a probability space $(\tilde \Omega, \tilde{\mathcal F}, \tilde{\mathbb P})$ supporting 
$\mathbb R ^d \times C ([0,T]; \mathbb R ^d )  \times C ([0,T]; \mathbb R ^d )   \times  D ([0,T]; \mathbb R ^d )$-valued r.v.'s
$
(\tilde \xi ^N, \tilde W ^N, \tilde m ^N, \tilde \eta ^N) 
$, $N \geq 1$, 
$
(\tilde \xi , \tilde W , \tilde m , \tilde \eta ) , 
$
such that: 
\begin{equation}\label{eq Skorokhod limits}
\begin{aligned}
    &\text{$\mathbb P \circ (\bar \xi ^N, \bar W ^N,  m ^N,  \eta ^N) ^{-1} =  \tilde{\mathbb P} \circ (\tilde \xi ^N, \tilde W ^N, \tilde m ^N, \tilde \eta ^N) ^{-1}$ for any $N \geq 1$;}\\
    &\text{$\mathbb P ^* =  \tilde{\mathbb P} \circ (\tilde \xi , \tilde W , \tilde m , \tilde \eta ) ^{-1}$;}\\
    &\text{$
(\tilde \xi ^N, \tilde W ^N, \tilde m ^N, \tilde \eta ^N)
\to
(\tilde \xi , \tilde W , \tilde m , \tilde \eta ) 
$, $\tilde{\mathbb P}$-a.s.,} 
\end{aligned}
\end{equation}
where the convergence is meant in $\mathbb R ^d \times C ([0,T]; \mathbb R ^d )  \times C ([0,T]; \mathbb R ^d )   \times  D ([0,T]; \mathbb R ^d )$. 
We denote with $\tilde\E$ the expectation under the probability measure $\tilde{\mathbb P}$.

We first recall that by assumption $\bar \xi^N$ converges to $\E[ \xi ]$ and observe that $\bar W ^N$ converges to $0$. 
Hence we have
$\tilde \xi  = \E [\xi]$ and $\tilde W = 0$.
Secondly, the convergence of  $\tilde \eta^N$ in the measure $dt+\delta_T$ together with the a priori estimates in \eqref{eq: a priori estimate} implies that
$$
\lim_N \tilde\E \bigg[ \int_0^T |\tilde \eta^N_t- \tilde \eta_t| dt \bigg] =0.
$$
Moreover, using that $m^N$ solves the SDE controlled by $-\eta^N$, we obtain that $\tilde m^N$ solves the SDE controlled by $- \tilde \eta^N$ with noises $\tilde \xi^N$ and $\tilde W ^N$.
Thus, using  Gr\"onwall estimates and the previous limit, we obtain the convergence
\[
\lim_N \tilde\E \bigg[ \sup_{t\in [0,T]} |\tilde m^N_t-\tilde m_t|\bigg] =0,
\]
and that
$$
\tilde m _ t  = \tilde \xi + \int _0^t (b \tilde m_s - \tilde \eta_s) ds. 
$$

We now show that 
\[
\tilde\E[J_c(\tilde \eta)] \leq \liminf_N J^N_c(\eta^N),
\] 
where $\tilde\E[J_c( \tilde \eta)]$ denotes the expectation of the deterministic cost in \eqref{eq:OC} in which the control $\eta$ is random.
This is a consequence of the above convergence, continuity and linear growth of $f$ and $g$, and by Fatou's lemma which yields 
\begin{align*} 
\tilde\E \bigg[ \int_0^T |\tilde{\eta}_t|^2 dt \bigg] &\leq \liminf_N \tilde\E \bigg[ \int_0^T |\tilde \eta^N_t|^2 dt\bigg], \\
\tilde\E\bigg[\int_0^T |\tilde{m}_t|^2 dt \bigg] &\leq \liminf_N \tilde\E \bigg[\int_0^T |\tilde m^N_t|^2 dt\bigg], \qquad 
\tilde\E \big[|\tilde m_T|^2\big] \leq \liminf_N \tilde\E \big[|\tilde m^N_T|^2\big].
\end{align*}
Next, for any given deterministic control $\beta$, denote by $\tilde{m}^{\beta,N}$ the solution to SDE in \eqref{eq:OC:N} in the probability space $(\tilde \Omega, \tilde{\mathcal F}, \tilde{\mathbb P}, \tilde W^N)$, and by $\tilde m^\beta$ the solution to the ODE in \eqref{eq:OC}; that is, 
$$
\begin{aligned}
 d \tilde m^{\beta,N}_t &= (b \tilde m^{\beta,N}_t +  \beta) dt +  \frac{\sigma}{\sqrt N} d \tilde W ^N_t, \quad \tilde m^{\beta,N}_0 = \tilde \xi^N, \\
 d \tilde m^{\beta}_t &= (b \tilde m^{\beta}_t +  \beta) dt , \quad \tilde m^{\beta}_0 = \tilde \xi. 
\end{aligned}
$$
The assumed convergence on the initial conditions imply that 
\[
\lim_N \tilde \E \bigg[ \sup_{t\in[0,T]}|\tilde m^{\beta, N}_t -\tilde m^\beta_t|^2 \bigg] =0, 
\] 
and thus $\lim_N J^N_c(\beta) = J_c(\beta)$. 
Hence the optimality of $\eta^N$ yields
\[
\tilde\E[J_c(\tilde \eta)] \leq \liminf_N J^N_c(\eta^N) \leq \lim_N J^N_c(\beta) = J_c(\beta), 
\] 
for any deterministic control $\beta$, which in turn implies that the measure  $\tilde{\mathbb P} \circ (\tilde \eta) ^{-1}$ charges only the minimizers of the map $J_c:  L^2 ([0,T]; \mathbb R ^d) \to \mathbb R$.

\smallbreak\noindent
\emph{Proof of \ref{thm:convergence3}}.  Claim follows directly from Claim \ref{thm:convergence2} and from Proposition \ref{prop:optimal controls of OC}.

\smallbreak\noindent
\emph{Proof of \ref{thm:convergence4}}. 
For any initial point $(t_0, \nu_0)$, we have $\eta^N_{t_0} =  u^N(t_0, \nu_0)$, where $\eta^N$ solves \eqref{eq:FBSDE:mean} on $[t_0,T]$, by the stochastic Pontryagin maximum principle, noticing that $u^N$ is the gradient (in $m$) of the value function of \eqref{eq:OC:N}.  
Let $t_0$ and $\nu_0$ such that $m \mapsto v(t_0,m)$ is differentiable in $\nu_0$. 
Thanks to Proposition 2 (iv), we have just to show that $\lim_N \eta^N_{t_0} = \eta_{t_0}$. In the above setup, we have 
$$\lim_N \tilde \E\bigg[ \sup_{t\in [t_0, T]} |\tilde m^N_t -\tilde m_t | \bigg],$$
but convergence is not in $L^2$. We apply It\^o-Tanaka formula to $\eta^N-\eta$ to get 
\begin{align*}
    \frac{d}{dt} \tilde{\E} [|\tilde \eta^N_t - \tilde \eta_t|] &\geq 
-  b \tilde{\E} [|\tilde \eta^N_t - \tilde \eta_t|]  - C \tilde \E [|\tilde m^N_t - \tilde m_t|]    - \frac{C}{N} (1+ \tilde \E [|m^N_t|] )
    \\
    \E [|\tilde \eta^N_T - \tilde \eta_T|] &\leq C \tilde \E[ |\tilde m^N_T - \tilde m_T|]    - \frac{C}{N} (1+ \tilde \E [|m^N_T|] ) ,
\end{align*}
and thus Gronwall's inequality yields
\[
\sup_{t\in [t_0, T]} \tilde{\E} [|\tilde \eta^N_t - \tilde \eta_t|] \leq C\Big( 
\frac1N + \sup_{t\in [0,T]} \tilde \E [|\tilde m^N_t - \tilde m_t|] \Big).  
\] 
Hence we obtain $\lim_N \sup_{t\in [t_0, T]} \tilde{\E} [|\tilde \eta^N_t - \tilde \eta_t|] =0$, which gives the claim. 
\end{proof}

\section{Mean field game with common noise}
\label{sec:MFG-common-noise}
We add a common noise to the linear-quadratic potential mean field game. 
Suppose the probability space $(\Omega, \mathcal F , \mathbb P)$ to be rich enough to accommodate another $d$-dimensional $ (\xi, W)$-independent Brownian motion $B$: the common noise.
Denote by $\mathcal A _\varepsilon$ the space of square integrable $\mathbb R ^d$-valued processes which are progressively measurable with respect to the right continuous extension of the filtration generated by $W, B$ and $\xi$, augmented by $\mathbb P$-null sets.
Denote by $\mathbb F ^B = (\mathcal F _t^B)_{t \in [0,T]}$ the filtration generated by $B$, augmented by $\mathbb P$-null sets.

Let $\epsilon>0$ and consider, for any control $\alpha^\varepsilon \in \mathcal A _\varepsilon$, the dynamics
$$
d X_{t}^\epsilon= (b X_t^\epsilon + \alpha_{t}^\epsilon )d t + \sigma d W_{t}
+\epsilon dB_t, \quad X_0^\epsilon = \xi,
$$
and the cost
$$
J^\varepsilon (\alpha^\epsilon, m)=\mathbb{E}\bigg[\int_{0}^{T} \frac{1}{2} \big( |\alpha_{t}^\epsilon|^{2}+ ( X_{t}^\epsilon  + \nabla f(m_{t}) ) ^2 \big) d t +  \frac12 ( X_{T}^\epsilon +  \nabla g(m_{T}) ) ^2\bigg],
$$
for a generic square integrable $\mathbb F ^B$-progressively measurable process $m:[0, T] \rightarrow \mathbb{R}^{d}$.
In a mean field equilibrium $(\alpha^\epsilon, m^\epsilon)$: $\alpha^\epsilon$ is optimal for $m^\epsilon$ over the controls in $\mathcal A _\varepsilon$, with corresponding optimal trajectory $X^\epsilon$, and it holds $m_{t}^\epsilon=\mathbb{E}[X_{t}^\epsilon | \mathcal{F}_t^{B}]$ for any $t$, $\P$-a.s.; this is a strong formulation of the MFG with common noise, see \cite[Vol I, \S 2.2]{CarmonaDelarue18}. 

Mean field equilibria satisfy the FBSDE
\be
\label{eq:MFG_eps_FBSDE}
\left\{\begin{array}{l}
d X_{t}^\epsilon= (b X_t^\epsilon -Y_{t}^\epsilon) d t + \sigma d W_{t} + \epsilon dB_t, \quad X_0^\epsilon = \xi,  \\
d Y_{t}^\epsilon=- [ b^\tra Y_t^\epsilon + X_t^\epsilon + \nabla f(\mathbb{E}[X_{t}^\epsilon| \mathcal{F}_t^{B}]) ] dt +Z_{t}^\epsilon d W_{t} + \tilde Z^\epsilon_t dB_t, \quad 
Y_{T}^\epsilon=X_T + \nabla g(\mathbb{E}[X_{T}^\epsilon| \mathcal{F}_t^{B}]).
\end{array}\right.
\ee 
Taking conditional expectations, the processes $m_{t}^\epsilon=\mathbb{E}[X_{t}^\epsilon |\mathcal{F}_t^B]$,  $\eta_{t}^\epsilon=\mathbb{E}[Y_{t}^\epsilon|\mathcal{F}_t^B]$ and $\zeta_{t}^\epsilon=\mathbb{E}[\tilde Z_{t}^\epsilon|\mathcal{F}_t^B]$ solve the FBSDE
\be
\label{eq:FBSDE:MFGeps}
\left\{\begin{array}{l}
d{m}_{t}^\epsilon= (b m_t^\epsilon-\eta_{t}^\epsilon) dt +\epsilon dB_t, \quad m_0^\epsilon = \mathbb E [\xi],   \\
d{\eta}_{t}^\epsilon=-[ b^\tra \eta_t^\epsilon + m_t^\epsilon + \nabla f\left(m_{t}^\epsilon\right)]dt + \zeta_t^\epsilon dB_t, \quad \eta_{T}^\epsilon=m_T^\epsilon+\nabla g\left(m_{T}^\epsilon\right) .
\end{array}\right.
\ee
By Pontryagin's maximum principle, the above systems can be seen as the optimality condition of the stochastic control problem: 
\be
\label{eq:OC_eps}
\tag{OC$_\epsilon$}
\begin{aligned}
&J_c^\epsilon(\beta)=\int_{0}^{T}\left(\frac{1}{2}\left|\beta_{t}\right|^{2} + \frac{1}{2}\left|m_{t}\right|^{2}+ f\left(m_{t}\right)\right) d t
+ \frac{1}{2}\left|m_{T}\right|^{2} 
+  g\left(m_{T}\right), \\
&\text{subject to} \quad d{m}_{t}= (b m_t + \beta_{t})dt + \epsilon dB_t, \quad m_0 = \E [\xi], 
\end{aligned}
\ee
where the controls are square integrable and $\mathbb F ^B$-progressively measurable.

With the help of Theorem \ref{lemma existence uniqueness FBSDE.mean}, we immediately get the following result.
\begin{prop}
The FBSDE \eqref{eq:MFG_eps_FBSDE} is uniquely solvable and its solution $(X^\epsilon,Y^\epsilon,Z^\epsilon,\tilde{Z}^\epsilon)$ gives the unique solution $(\alpha^\epsilon,  m^\epsilon) =  ( -Y^\epsilon, \mathbb{E}[X^\epsilon |\mathcal{F}^B])$ 
to the MFG with common noise. 
The control problem \eqref{eq:OC_eps} has a unique optimal control $-\eta^\epsilon$, and the related optimal trajectory is  $m^\epsilon$. 
In particular, we have $m^\epsilon_t= \E[X^\epsilon_t | \mathcal{F}^B_t]$ and $\eta^\epsilon_t= \E[Y^\epsilon_t | \mathcal{F}^B_t]$, for any $t$, $\mathbb{P}$-a.s.
\end{prop}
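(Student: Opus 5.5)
The plan is to reduce everything to the finite-dimensional FBSDE \eqref{eq:FBSDE:MFGeps} and invoke Theorem \ref{lemma existence uniqueness FBSDE.mean}. System \eqref{eq:FBSDE:MFGeps} has the same structure as \eqref{eq:FBSDE:mean}: take the Brownian motion $B$ in place of $\bar W^N$, the noise intensity $\epsilon>0$ in place of $\sigma/\sqrt N$, and the costs $F(m)=\tfrac12|m|^2+f(m)$, $G(m)=\tfrac12|m|^2+g(m)$, i.e.\ the case with no $\tfrac1N$ reminder. The Riccati-based proof in Appendix \ref{appendix lemma existence uniqueness FBSDE.mean} only uses the positivity of the noise coefficient and the boundedness and Lipschitz properties of $\nabla f,\nabla g$ from the standing assumption. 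Dropping the reminder terms only simplifies it. So \eqref{eq:FBSDE:MFGeps} has a unique strong solution $(m^\epsilon,\eta^\epsilon,\zeta^\epsilon)$, adapted to $\mathbb F^B$, with deterministic initial datum $\E[\xi]$.

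Given this, \eqref{eq:MFG_eps_FBSDE} decouples. With $m^\epsilon$ fixed as a known $\mathbb F^B$-adapted input, the pair $(X,Y)$ solves a \emph{linear} FBSDE with source terms $\nabla f(m^\epsilon_t)$ and $\nabla g(m^\epsilon_T)$. This is exactly the Pontryagin system of the strictly convex LQ problem $J^\epsilon(\cdot,m^\epsilon)$, so it has a unique solution $(X^\epsilon,Y^\epsilon,Z^\epsilon,\tilde Z^\epsilon)$. To close the loop, I take conditional expectations given $\mathcal F^B_t$. Since $W$ and $\xi$ are independent of $B$, the $dW$ integral vanishes and the $dB$ integral passes through (a standard compatibility/immersion argument). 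Hence $(\E[X^\epsilon_t|\mathcal F^B_t],\E[Y^\epsilon_t|\mathcal F^B_t])$ solves the same linear system driven by $m^\epsilon$ as \eqref{eq:FBSDE:MFGeps}. By uniqueness of that linear system, it coincides with $(m^\epsilon,\eta^\epsilon)$, which gives the consistency condition and an equilibrium.

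Conversely, take any equilibrium and any solution of \eqref{eq:MFG_eps_FBSDE}. Its conditional means solve \eqref{eq:FBSDE:MFGeps}, as already computed in the text, so by uniqueness they equal $(m^\epsilon,\eta^\epsilon)$. Then $(X,Y)$ is determined by the uniqueness of the linear system. Optimality of $\alpha=-Y$ follows from the sufficient maximum principle, because the problem is convex for fixed $m$. Uniqueness of the equilibrium follows because any equilibrium yields a solution of \eqref{eq:MFG_eps_FBSDE}, by the necessary maximum principle for the convex LQ problem.

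For \eqref{eq:OC_eps}, I argue as in Theorem \ref{thm: Nash corresponds to minima of OC}. By the necessary stochastic maximum principle, any optimal control is $-\eta$ for some solution of \eqref{eq:FBSDE:MFGeps}, so there is at most one optimal control. Existence of an optimal control comes from the coercivity bound of Proposition \ref{prop:optimal controls of OC}, by the same argument in expectation: the $\tfrac12|\beta|^2$ term dominates the linear growth of $f,g$. This gives a bounded minimizing sequence in $L^2(\Omega\times[0,T])$, which is weakly compact. The state depends affinely on the control, so lower semicontinuity holds along weakly convergent subsequences after passing to convex combinations (Mazur); $f$ and $g$ are not convex, but a strongly converging convex-combination sequence still gives the liminf. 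The main obstacle is this existence step, since the cost is non-convex. An alternative that avoids it is a verification argument: the decoupling field of \eqref{eq:FBSDE:MFGeps} is the gradient of a smooth value function solving a nondegenerate HJB equation, since $\epsilon>0$, and the feedback $-\eta^\epsilon$ is then optimal. Either route identifies the unique optimal control as $-\eta^\epsilon$ with trajectory $m^\epsilon=\E[X^\epsilon|\mathcal F^B]$.
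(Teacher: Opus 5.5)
Your architecture matches what the paper means by ``immediately from Theorem~\ref{lemma existence uniqueness FBSDE.mean}''. You obtain well-posedness of \eqref{eq:FBSDE:MFGeps} from the Riccati/Delarue argument without the $\frac1N$ terms. You then freeze $m^\epsilon$ and close the loop by conditioning on $\mathbb F^B$, which is Steps 1--2 of Appendix~\ref{appendix thm:EU NE} with averaging replaced by conditioning. For \eqref{eq:OC_eps} you use the necessary maximum principle plus uniqueness, as in Theorem~\ref{thm: Nash corresponds to minima of OC}. You are also right that this last argument only shows that an optimal control, \emph{if it exists}, must equal $-\eta^\epsilon$; the paper leaves existence implicit.

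The step that fails is your first existence route. Mazur gives $\hat\beta_n\in\mathrm{conv}\{\beta_k\}_{k\ge n}$ with $\hat\beta_n\to\beta$ strongly, so $J_c^\epsilon(\hat\beta_n)\to J_c^\epsilon(\beta)$. To get $J_c^\epsilon(\beta)\le\liminf_n J_c^\epsilon(\beta_n)$ you would also need $J_c^\epsilon(\hat\beta_n)\le\sup_{k\ge n}J_c^\epsilon(\beta_k)$. That is convexity, which is exactly what is missing.

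In fact $J_c^\epsilon$ is not weakly lower semicontinuous. In Proposition~\ref{prop:optimal controls of OC} the map $\beta\mapsto m^\beta$ is compact from $L^2([0,T])$ into $C([0,T])$, but on $L^2(\Omega\times[0,T])$ it is not compact in the $\omega$-variable. Weakly converging controls only give weakly converging states in $L^2(\Omega)$. For example, $\beta_n=c\,\mathrm{sgn}(\sin(nB_{t_1}))\mathbbm{1}_{(t_1,T]}$ converges weakly to $0$, while the states keep a $\pm c(T-t_1)$ spread. When $\frac12|m|^2+g$ is concave near the origin, as in Section~\ref{sec:examples}, small $t_1,\epsilon$ make $J_c^\epsilon(0)$ strictly larger than $\lim_n J_c^\epsilon(\beta_n)$.

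So drop route (a) and use route (b), which works. Smoothness of $v^\epsilon$ is easy to justify here: by the Cole--Hopf transform, $w=e^{-v^\epsilon/\epsilon^2}$ solves a linear parabolic equation with a Feynman--Kac representation. A verification argument then gives an optimal feedback control. The necessary maximum principle together with uniqueness for \eqref{eq:FBSDE:MFGeps} identifies that control with $-\eta^\epsilon$. You therefore do not need to prove separately that the decoupling field equals $\nabla_m v^\epsilon$.
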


Let $v^\epsilon$ be the value function of \eqref{eq:OC_eps}. We obtain the following convergence result, whose proof is similar to that of Theorem \ref{thm:convergence}.  
\begin{thm} 
\label{thm:eps_convergence}
The following hold:
\begin{enumerate}
\item\label{thm:eps_convergence1} The family $ (m^{\epsilon}, \eta^{\epsilon} )$ is tight on $C ([0, T] ; \mathbb{R}^{ d} ) \times D([0,T]; \R^d)$, endowed with the (product) topology of uniform convergence and pseudo-path topology.

\item\label{thm:eps_convergence2} As $\epsilon \to 0$, any limit point $(m, \eta)$ in distribution is supported on (deterministic) minimizes of \eqref{eq:OC}.

\item\label{thm:eps_convergence3}  The sequence $ (m^{\epsilon}, \eta^{\epsilon} )$ converges, as $\epsilon \to 0$, to the unique (deterministic) minimizer of \eqref{eq:OC} if and only if $v$ is differentiable in $ (0, m_{0} )$. 

\item We have $\lim_\epsilon \nabla_m v^\epsilon (t_0, \nu_0) = \nabla_m v(t_0, \nu_0)$ at any point $t_0$ and any $\nu_0$ in which $m \mapsto v(t_0,m)$ is differentiable (thus for any $t_0$ and almost every $\nu_0$).
\end{enumerate}
\end{thm}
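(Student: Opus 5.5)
The plan is to mirror the proof of Theorem \ref{thm:convergence}, replacing $N$ by $\epsilon$. Now \eqref{eq:OC_eps} plays the role of \eqref{eq:OC:N}, with the simplification that there are no reminder costs: the running and terminal costs of \eqref{eq:OC_eps} coincide with those of \eqref{eq:OC}, and the only perturbation is the noise $\epsilon B$. By the Proposition preceding the statement, $-\eta^\epsilon$ is the unique optimal control of \eqref{eq:OC_eps}, with optimal trajectory $m^\epsilon$.

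\emph{Tightness.} First I compare $J^\epsilon_c(-\eta^\epsilon)\le J^\epsilon_c(0)$. Using the boundedness of $\nabla f,\nabla g$ (so that $f,g$ have linear growth) and $\E[\sup_t|m^\epsilon_t|]\le C(1+\E\int_0^T|\eta^\epsilon_t|dt)$ for $\epsilon\le 1$, I obtain the uniform estimate
\[
\sup_{\epsilon\le 1}\Big(\E\Big[\int_0^T|\eta^\epsilon_t|^2dt\Big]+\E\Big[\sup_{t}|m^\epsilon_t|^2\Big]\Big)<\infty .
\]
Aldous' criterion then gives tightness of $m^\epsilon$ in $C([0,T];\R^d)$, since $\E|m^\epsilon_{\tau+h}-m^\epsilon_\tau|\le C\sqrt h$. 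For $\eta^\epsilon$, I bound the conditional variation $V_\pi(\eta^\epsilon)$ via the backward equation in \eqref{eq:FBSDE:MFGeps}; the $\zeta^\epsilon dB$ martingale part drops out in the conditional expectation. The Meyer--Zheng criterion then gives tightness in the pseudo-path topology.

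\emph{Identification of limits.} Take $\epsilon_n\to0$ and use Skorokhod representation on $(B,m^{\epsilon_n},\eta^{\epsilon_n})$. The noise term $\epsilon_n \tilde B$ vanishes, and the $L^2$ bound upgrades the $dt+\delta_T$ convergence to $L^1(dt)$ convergence of $\tilde\eta^{\epsilon_n}$. Gronwall then gives $\tilde m^{\epsilon_n}\to\tilde m$ uniformly in $L^1$, where $\dot{\tilde m}=b\tilde m-\tilde\eta$ and $\tilde m_0=\E[\xi]$. Fatou and continuity of $f,g$ give $\tilde\E[J_c(\tilde\eta)]\le\liminf J_c^{\epsilon_n}(\eta^{\epsilon_n})$. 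For any deterministic $\beta$, the state of \eqref{eq:OC_eps} under $\beta$ differs from the deterministic one by $\epsilon e^{bt}$-filtered noise, which is $O(\epsilon)$ in $L^2$, so $J^\epsilon_c(\beta)\to J_c(\beta)$. Optimality then yields $\tilde\E[J_c(\tilde\eta)]\le\inf_\beta J_c(\beta)$, which forces almost sure minimality and proves (2). Claim (3) follows from (2) combined with Proposition \ref{prop:optimal controls of OC}(iv): when the minimizer is unique every limit point is the same Dirac mass, and when it is not, the limit cannot be a single deterministic point because the laws of limit points vary.

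\emph{Gradients.} I expect claim (4) to be the main obstacle. The starting point is the identity $\nabla_m v^\epsilon(t_0,\nu_0)=\eta^\epsilon_{t_0}$, where the system is started at $(t_0,\nu_0)$; this holds since $\eta^\epsilon_t=\nabla_m v^\epsilon(t,m^\epsilon_t)$ is the decoupling field, by the same PDE argument as for $u^N$. At a differentiability point, steps (1)--(2) on $[t_0,T]$ give $m^\epsilon\to m$, where $m$ is the unique optimal trajectory, together with $\eta$. To pass to $\eta_{t_0}$, I would apply an It\^o--Tanaka/Gronwall estimate to $\tilde\eta^\epsilon-\tilde\eta$ in the backward equation, using the Lipschitz property of $m\mapsto m+\nabla f(m)$ and $m\mapsto m+\nabla g(m)$. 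Taking expectations removes the martingale part, and this gives $\sup_{t\ge t_0}\tilde\E|\tilde\eta^\epsilon_t-\tilde\eta_t|\le C\sup_t\tilde\E|\tilde m^\epsilon_t-\tilde m_t|\to0$. Since $\eta^\epsilon_{t_0}$ is deterministic, this gives the convergence of the gradients. The delicacy is that this comparison must be carried out on the Skorokhod space, where $\tilde\eta$ is only a limit process: one checks that it satisfies the limiting backward ODE with terminal value $\tilde m_T+\nabla g(\tilde m_T)$ by passing to the limit in the integrated form of the backward equation.
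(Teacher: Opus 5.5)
Your proposal follows the paper's route. The paper gives no separate proof of this theorem; it only says the proof is similar to that of Theorem \ref{thm:convergence}. Your transcription of that proof is sound, and simpler because there are no remainder costs: comparison with the zero control, then Aldous and Meyer--Zheng, Skorokhod representation, Fatou together with $J^\epsilon_c(\beta)\to J_c(\beta)$ for deterministic $\beta$, and a backward Gronwall estimate for the gradients.

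\textbf{Claim (3), ``only if'' direction.} You argue that when the minimizer is not unique ``the limit cannot be a single deterministic point because the laws of limit points vary''. This should be removed. Nothing in (2) gives it: (2) only says that limit points are supported on minimizers. Convergence to a Dirac mass at one of several minimizers is not excluded, and nothing in the paper proves otherwise. Under the intended reading, convergence ``to the unique minimizer'' presupposes that the minimizer is unique. So the ``only if'' direction is just Proposition \ref{prop:optimal controls of OC}(iv).

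\textbf{Claim (4).} The Skorokhod delicacy you flag is not really there. At a point where $v(t_0,\cdot)$ is differentiable, the limit is deterministic. Your uniform $L^2$ bounds then give $\E[\sup_t|m^\epsilon_t-m_t|]\to0$ directly on the original space. The linear BSDE satisfied by $\eta^\epsilon-\eta$, with $\eta$ the adjoint in \eqref{eq:OC.FBSDE} started at $(t_0,\nu_0)$, yields the estimate at $t_0$ without any limiting backward equation on the Skorokhod space.
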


\section{Examples of selection}
\label{sec:examples}
In this section we discuss several examples which illustrate how to derive properties of the limiting measure from the previous result.
In the whole section, we assume that 
$
\bar \xi^N$ converges to $ m_0 = \E [\xi].
$

Our first example concerns the case of a unique minimizer. 
\begin{proposition}
If $\sigma >0$ and the function $J_c$ has only one optimal control $\hat \beta$ with optimal trajectory $\hat m$ solving $\hat m _t = m_{0}+\int_0^t (b\hat m_s + \hat \beta_s) ds$, 
then  the sequence ${m}^{N}$ converges to $\hat m$.
\end{proposition}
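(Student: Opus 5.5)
This is a direct consequence of Theorem \ref{thm:convergence}, parts \ref{thm:convergence1} and \ref{thm:convergence2}, combined with a subsequence argument; no new estimates are needed.

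By part \ref{thm:convergence1}, the sequence $(m^N,\eta^N)$ is tight on $C([0,T];\R^d)\times D([0,T];\R^d)$. In particular $(m^N)_N$ is tight on $C([0,T];\R^d)$. Let $\mu$ be any limit point in distribution of $(m^N)_N$, reached along some subsequence. By tightness we may pass to a further subsequence along which the pair $(m^N,\eta^N)$ converges in law to some $(m,\eta)$, whose first marginal is $\mu$.

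We then follow the proof of part \ref{thm:convergence2}. After a Skorokhod representation, the limit satisfies $\tilde m_t = m_0 + \int_0^t (b\tilde m_s - \tilde\eta_s)\,ds$ almost surely. Moreover $\tilde\eta$ is almost surely a minimizer of $J_c$, that is, $-\tilde\eta$ is an optimal control in the sign convention of \eqref{eq:OC}. By hypothesis the optimal control is unique, so $-\tilde\eta=\hat\beta$ in $L^2([0,T];\R^d)$ almost surely. Uniqueness of solutions to the linear ODE $\dot m = bm + \hat\beta$, $m_0$ given, then gives $\tilde m=\hat m$ almost surely. Hence $\mu=\delta_{\hat m}$.

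Since every subsequence of $(m^N)_N$ has a further subsequence converging in law to the same deterministic limit $\delta_{\hat m}$, the whole sequence converges in distribution to $\hat m$. Convergence in law to a constant is equivalent to convergence in probability, so $m^N\to\hat m$ in probability in $C([0,T];\R^d)$. The a priori bound \eqref{eq: a priori estimate}, $\sup_N\E[\sup_t|m^N_t|^2]<\infty$, gives uniform integrability, which upgrades this to $\E[\sup_t|m^N_t-\hat m_t|]\to0$.

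The only point requiring care is making sure the limit identification from part \ref{thm:convergence2} carries over to this setting. The sign convention (optimal control $-\eta$) must be tracked correctly, and the null sets in $L^2([0,T];\R^d)$ must not affect the conclusion. They do not, because the ODE solution depends only on the $L^2$ class of the control. Alternatively, one can simply invoke part \ref{thm:convergence3}: uniqueness of the optimal control is exactly the differentiability condition of Proposition \ref{prop:optimal controls of OC}(iv).
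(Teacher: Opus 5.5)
Your proof is correct and is essentially the paper's argument. The paper simply states that the result is obvious from Theorem \ref{thm:convergence}, and your subsequence argument through parts \ref{thm:convergence1}--\ref{thm:convergence2} supplies exactly the omitted details; so does the shortcut via part \ref{thm:convergence3}, since a unique optimal control means a unique optimal trajectory, hence differentiability of $v$ by Proposition \ref{prop:optimal controls of OC}(iv) — and your upgrade to convergence in probability and in $L^1$ via \eqref{eq: a priori estimate} is a correct extra.
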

\begin{proof}
    Obvious from Theorem \ref{thm:convergence}.
\end{proof}

\begin{example}
    There are two typical examples in which the assumptions of the previous proposition are verified:
    \begin{enumerate}
        \item  The case is which the value function in differentiable in $(0,m_0)$, as in Theorem \ref{thm:convergence}-\ref{thm:convergence3}.
        \item The case in which the functions $f,g$ are convex. 
    In this case the Lasry-Lions monotonicity condition is satisfied and the mean field equilibrium is unique, so that our proposition recovers \cite[Corollary 2.10]{lacker.2016general}.
    \end{enumerate}  
\end{example}

\subsection{One-dimensional symmetric selection}
\label{sec:1D_example}

In dimension 1, we discuss a selection principle with two minimizers, thus in which multiple mean field equilibria could arise.
We state the following proposition, which assumes the existence of two distinct optimal controls (more explicit conditions are given in the next subsection). 
\begin{theorem}
    \label{prop case two minima general}
    Consider the case  $\sigma >0$, $d=1$ with initial condition such that $\mathbb  P \circ (-\bar \xi ^N)^{-1} = \mathbb  P \circ (\bar \xi ^N)^{-1}$ and $\nu_0=0 $. 
    Suppose
    \begin{enumerate}
        \item $f,g$ to be even (i.e., $f(-m)= f(m)$ and $g(-m)= g(m)$ for any $ m  \in \R$); 
        \item $J_c$ to have two and only two (distinct) optimal controls, one of which strictly positive.
    \end{enumerate} 
    Then, if $\hat \beta$ is the positive optimal control, the control $- \hat \beta$ is optimal and we have 
    \[
    \lim_{N\to\infty} \P \circ (\hat{m}^{N} )^{-1} = \frac12 \delta_{m^-} +  \frac12 \delta_{ m^+},
    \]
    where $m^+_t:= \int_0^t (b\hat m_s + \hat \beta_s) ds $ and $m_t^- :=  \int_0^t(b\hat m_s - \hat \beta_s) ds =-m^+_t$.
\end{theorem}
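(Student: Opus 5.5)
We combine the selection principle of Theorem \ref{thm:convergence} with a symmetry argument.

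\emph{Step 1: $-\hat\beta$ is optimal.} Since $f,g$ are even and the state equation is linear, the map $\beta \mapsto -\beta$ sends the trajectory $m$ started at $\nu_0=0$ to $-m$. Hence $J_c(-\beta)=J_c(\beta)$ for every $\beta$, and $-\hat\beta$ is optimal with trajectory $m^-=-m^+$. Because $\hat\beta$ is strictly positive, $-\hat\beta\neq\hat\beta$. By assumption (2), $\{\hat\beta,-\hat\beta\}$ is therefore exactly the set of minimizers, and $\{m^+,m^-\}$ the set of optimal trajectories. Moreover $m^+\neq m^-$: since $m^+_t$ solves $\dot m = bm+\hat\beta$ from $0$ with $\hat\beta>0$, it is strictly positive for $t>0$ in dimension one.

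\emph{Step 2: symmetry of the prelimit law.} The goal is to show that $\P\circ(m^N)^{-1}$ is invariant under $m\mapsto -m$. Consider the transformed data $\tilde\xi^i=-\xi^i$ and $\tilde W^i=-W^i$. Then $\bar W^N$ keeps the same law, and $\bar\xi^N$ has the same law by hypothesis; independence of the two is preserved. The key point is that the auxiliary problem \eqref{eq:OC:N} is invariant under $(m,\beta)\mapsto(-m,-\beta)$. This holds because $f,g$ even imply that $\nabla f,\nabla g$ are odd, hence $R_f$ and $R_g$ are even, so $F_N,G_N$ are even. Equivalently, the FBSDE \eqref{eq:FBSDE:mean} is invariant under $(m,\eta,\zeta)\mapsto(-m,-\eta,\zeta)$ together with $W\mapsto -W$: the drift $\nabla F_N$ is odd, and $-\zeta\,d(-W)=\zeta\,dW$ only after the relabeling, which is fine.

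By Theorem \ref{lemma existence uniqueness FBSDE.mean} the solution is unique and strong. A small subtlety is that \eqref{eq:FBSDE:mean} is driven by all the $W^j$, not just $\bar W^N$, and by all the $\xi^i$ rather than only $\bar\xi^N$. I would handle this through Theorem \ref{thm: Nash corresponds to minima of OC}. The optimal control can be written through the decoupling field $u^N$ of \eqref{eq:system_PDE}, which does not depend on the noise realization. Thus $m^N$ is the strong solution of $dm=(bm-u^N(t,m))dt+\frac{\sigma}{\sqrt N}d\bar W^N$ with $m_0=\bar\xi^N$, a measurable functional of $(\bar\xi^N,\bar W^N)$ alone. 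Evenness of $F_N,G_N$ gives, by uniqueness of \eqref{eq:system_PDE}, that $u^N(t,-m)=-u^N(t,m)$. Hence $-m^N$ solves the same SDE driven by $(-\bar\xi^N,-\bar W^N)$, which has the same joint law as $(\bar\xi^N,\bar W^N)$. Pathwise uniqueness together with Yamada--Watanabe then gives $\P\circ(-m^N)^{-1}=\P\circ(m^N)^{-1}$.

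\emph{Step 3: identification of the limit.} The limit is not literally covered by Theorem \ref{thm:convergence}, since $\bar\xi^N\to 0$ only in the sense assumed at the start of this section. Still, the theorem applies with $\E[\xi]=0$. By Theorem \ref{thm:convergence}, every limit point $\mu$ of $\P\circ(m^N)^{-1}$ on $C([0,T];\R)$ is supported on $\{m^+,m^-\}$, so $\mu=p\,\delta_{m^+}+(1-p)\,\delta_{m^-}$. Symmetry is preserved under weak limits, since $m\mapsto -m$ is continuous, so $\mu$ is invariant under negation. Because $m^+\neq m^-=-m^+$, this forces $p=\tfrac12$. Tightness and uniqueness of the limit point then give convergence of the whole sequence.

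The main obstacle is Step 2: rigorously reducing $m^N$ to a functional of $(\bar\xi^N,\bar W^N)$ and establishing the oddness of $u^N$. The PDE uniqueness in the linear-growth class from the proposition following Theorem \ref{thm: Nash corresponds to minima of OC} is exactly what is needed for the latter.
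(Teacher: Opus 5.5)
Your proof is correct and follows the paper's strategy. The prelimit law $\P\circ(m^N)^{-1}$ is shown to be invariant under $m\mapsto -m$, using that $F_N',G_N'$ are odd together with uniqueness; Theorem~\ref{thm:convergence} then gives $\mu=p\,\delta_{m^+}+(1-p)\,\delta_{m^-}$, and symmetry forces $p=\tfrac12$.

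The two differences are refinements in execution, not a different route:
\begin{enumerate}
\item You obtain the prelimit symmetry through the odd decoupling field $u^N$, which makes $m^N$ a functional of $(\bar\xi^N,\bar W^N)$ alone. This spells out what the paper's appeal to ``weak uniqueness'' under $(\bar\xi^N,\bm W)\mapsto(-\bar\xi^N,-\bm W)$ leaves implicit.
\item You identify $p$ by passing the invariance to the limit with the continuous mapping theorem. This avoids the paper's use of $\P[m^N_T=0]=0$ and Portmanteau on half-lines.
\end{enumerate}
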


\begin{example}\label{example delarue}
The assumption on the optimal controls of $J_c$ in the previous proposition is satisfied in the example  studied in \cite{DelarueFT19}, where  $f(m)=0$ and $g$ is defined as
$$
g(m):= - \frac{m}{r_\delta} \mathbbm{1}_{\{ |m|\leq r_\delta \} } - \text{sign}(m) \mathbbm{1}_{\{ |m| > r_\delta \} }, \ m\in \mathbb{R}, \quad \delta \in (0,T), \quad  r_\delta:= \int_\delta^T {w_s^{-2}} ds, 
$$
with $w_t:=\exp\big[\int_t^T( - b + \eta_s) ds\big]$, $\eta$ solution to the Riccati equation
$
\frac{d \eta_t}{dt} = \eta_t^2 - 2 b \eta_t - 1$, $\eta_T = 1$.
In this case, the two optimal trajectories are given by $m^-_t = -  w_t \int_0^t {w_s^{-2}} ds$ and $m^+_t =  w_t \int_0^t {w_s^{-2}} ds$, and Theorem \ref{prop case two minima general} recovers the $N$-player selection of \cite[Theorem 4]{DelarueFT19},
with the difference that \cite{DelarueFT19} consider $N$ player games in which player $i$ optimization problem depends only on $\frac{1}{N-1} \sum _{j\ne i} X_T^j$.
\end{example} 
\begin{remark}
Notice that in Example \ref{example delarue} the game is submodular and the two equilibrium-trajectories $m^-,m^+$ correspond to the minimal and maximal equilibria.
The game also admits an intermediate the equilibrium $(\alpha^0,m^0)=(0,0)$ which is not selected as the limit of the empirical measure of the Nash equilibria.
\end{remark}

\begin{proof}[Proof of Theorem \ref{prop case two minima general}]
Since  $f,g$ are even, $f',g'$ are odd (i.e., $f(-m)= - f(m)$ and $g(-m)= - g(m)$ for any $ m  \in \R$) and $m f'(m), m g'(m)$ are even as well. 
Thus, computing the derivatives in $m$ of the costs of the control problem  \eqref{eq:OC:N}, we find
$$
\begin{aligned}
 F_N'( - m)&=
\Big( \I_d + \frac{1}{N}  f''(m) \Big) \Big( m +   f' (m) \Big) = - F_N'(  m), \\
 G_N'(-m)&=
\Big( \I_d + \frac{1}{N} g''(m) \Big) \Big( m +  g' (m) \Big)= - G_N'(m).
\end{aligned}
$$
By the stochastic maximum principle, any optimal control of the control problem \eqref{eq:OC:N}, writes as $-\eta^N$ for some solution $(m^N,\eta^N, \zeta^{N})$ to  the FBSDE \eqref{eq:FBSDE.mean.niceFG}. 
By the previous identities, the process $(\tilde m^N,\tilde \eta^N,\tilde \zeta^{N}) = (-m^N,-\eta^N, \zeta^{N})$ solves the FBSDE
$$
 \left\{\begin{array}{l} 
 d \tilde m ^N_{t} = (b \tilde m^N_t - \tilde \eta^N_{t}) d t- \frac{\sigma}N  \sum_{j} d W_{t}^{j}
 \quad \tilde m^N_{0}= - \bar \xi ^N, \\
d \tilde \eta^N_{t} =-[ b \tilde \eta^N_t +  F_N' ( \tilde m ^N_{t} ) ] dt - \sum_{j} \tilde\zeta_{t}^{jN} d W_{t}^{j}, \quad \eta_{T}= G_N' (\tilde m^N_{T}),
\end{array}\right.
$$
which the same system as \eqref{eq:FBSDE.mean.niceFG}, but driven by the noises $(-\bar \xi ^N, - \bm{ W})$.
By our assumption,  $\mathbb  P \circ (-\bar \xi ^N, -  \bm{ W})^{-1} = \mathbb  P \circ (\bar \xi ^N,   \bm{ W})^{-1}$,
and by weak uniqueness of the solution to the FBSDE \eqref{eq:FBSDE.mean.niceFG} (following from Theorem \ref{lemma existence uniqueness FBSDE.mean}), we deduce that $\mathbb  P \circ (m^N)^{-1} = \mathbb  P \circ (- m^N)^{-1}$. 
Hence we have 
$
\mathbb P [m^N_t <0] = \mathbb P [m^N_t >0] \leq \frac12 
$
for any $t$,
and since $\mathbb P [m^N_t =0] = 0$, we obtain that
\be\label{eq 12 NE}
\mathbb P [m^N_t <0] = \mathbb P [m^N_t >0] = \frac12.
\ee

Let $\mu \in \mathcal P (C([0,T]; \R))$ be the limit in distribution of a subsequence (not relabeled) of $m^N$ converging in distribution on $C([0,T], \R)$.
By Theorem \ref{thm:convergence}, the limit $\mu$ is supported only on the optimal trajectories of $J_c$, which are  $m^+_t:= \int_0^t  (bm^+_t + \hat \beta_s) ds $ and $m_t^- = -m^+_t$. 
Thus, there exists $p \in [0,1]$ such that
$$
\mu = p \delta_{m^-} +  (1-p) \delta_{ m^+},
$$
where $\delta_{(x_t)_t}$ denoted the Dirac delta on a particular trajectory $(x_t)_t \in C([0,T]; \R)$.
In order to determine $p$, we use \eqref{eq 12 NE}.
Indeed, denoting with $\pi_T$ the map $\pi_T: C([0,T]; \R) \to \R $, $\pi_T( (x_t)_t) := x_T$, the continuous mapping theorem gives that $\mathbb P \circ (m^N_T)^{-1}$ converges in distribution to $\mu \circ (\pi_T)^{-1} =  p \delta_{m^-_T} + (1-p) \delta_{m^+_T} $.
Notice that, since $m_T^- <0<m^+_T$, we have
$$
\begin{aligned}
    \mu \circ (\pi_T)^{-1} (-\infty,0) &= (p \delta_{m^-_T} +  (1-p) \delta_{ m^+_T})(-\infty,0) = p,  \\
    \mu \circ (\pi_T)^{-1} (0,\infty)& = (p \delta_{m^-_T} +  (1-p) \delta_{ m^+_T})(0,\infty)=  1-p .
\end{aligned}
$$
Hence, by Portmanteau theorem and \eqref{eq 12 NE}, we deduce that
$$
\begin{aligned}
   p = \mu \circ (\pi_T)^{-1} (-\infty,0) &\leq \liminf_N \mathbb P [m^N_T <0] =  \frac12, \\
    1-p = \mu \circ (\pi_T)^{-1} (0,\infty) &\leq \liminf_N \mathbb P [m^N_T >0] =  \frac12,
\end{aligned}
$$
so that $p=\frac12$ and $\mu =  \frac12 \delta_{m^-} +  \frac12 \delta_{ m^+}$.
Finally, since every subsequence have the same limit $\mu$, the sequence $(m^N)_N$ converges to $\mu$, thus completing the proof.
\end{proof} 

\subsection{The case of constant optimal controls}
\label{sec:constant}
In this subsection we will focus on the case in which the representative player minimization problem is
\be\label{eq:MFG.simple}
\begin{aligned}
    & J(\alpha, m)=\mathbb{E}\bigg[\int_{0}^{T} \frac{1}{2}  |\alpha_{t}|^{2} d t + \frac12 ( X_{T} +  \nabla g(m_{T}) ) ^2\bigg], \\
    & \text{subject to} \quad d X_{t}= \alpha_{t} d t + \sigma d W_{t}, \quad X_0 = \xi,
\end{aligned}
\ee
and the related optimal control problem is
\be
\label{eq:OC.simple}
\begin{aligned}
    & J_c(t_0,\nu_0;\beta)=\int_{t_0}^{T} \frac{1}{2}\left|\beta_{t}\right|^{2}  d t
+ \frac{1}{2}\left|m_{T}\right|^{2} 
+  g\left(m_{T}\right), \\
&\text{subject to} \quad \dot{m}_{t}=  \beta_{t}, \quad m_{t_0} = \nu_0. 
\end{aligned}
\ee
For this model, all the results of the previous sections applies with the same arguments. 

We first discuss the following property of optimal controls.
\begin{lemma}
\label{lemma constant optimal controls}
If $\hat{\beta}$ is optimal for $J_c$ as in \eqref{eq:OC.simple}, then $\hat{\beta}$ is constant in time; i.e., $ \hat{\beta}_{t}=\hat{a}$ for any $ t \in [0,T]$.
\end{lemma}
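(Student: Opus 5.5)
The plan is a direct convexity (Jensen) argument, with the Pontryagin system as an alternative route. Fix an optimal control $\hat\beta$ for $J_c$ in \eqref{eq:OC.simple} with initial condition $(t_0,\nu_0)$, and let $\hat m$ be its trajectory. Since $\dot m_t=\beta_t$, the terminal state is $\hat m_T=\nu_0+\int_{t_0}^T\hat\beta_t\,dt$. The terminal cost depends on the control only through this integral. Set $\hat a:=\frac{1}{T-t_0}\int_{t_0}^T\hat\beta_t\,dt$; the case $t_0=T$ is trivial. The constant control $\beta_t\equiv\hat a$ then produces the same terminal state $m_T=\hat m_T$, and hence the same terminal cost $\frac12|m_T|^2+g(m_T)$.

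The key step is to compare running costs. By Jensen's inequality (or Cauchy--Schwarz),
\[
\int_{t_0}^T \tfrac12|\hat a|^2\,dt=\frac{1}{2(T-t_0)}\Big|\int_{t_0}^T\hat\beta_t\,dt\Big|^2\le \int_{t_0}^T\tfrac12|\hat\beta_t|^2\,dt .
\]
Strict convexity of $|\cdot|^2$ gives equality if and only if $\hat\beta_t=\hat a$ for a.e.\ $t$. It follows that $J_c(t_0,\nu_0;\hat a)\le J_c(t_0,\nu_0;\hat\beta)$. Optimality of $\hat\beta$ forces equality here, and therefore $\hat\beta=\hat a$ almost everywhere.

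To upgrade ``almost everywhere'' to ``for every $t$'', I would use Proposition \ref{prop:optimal controls of OC}(ii) in the present model, which has $b=0$ and $f=0$. It says that an optimal control equals $-\eta$, where $\eta$ solves $\dot\eta_t=0$ with $\eta_T=m_T+\nabla g(m_T)$. Hence $\eta$ is constant, and $\hat\beta=-\eta$ is a constant function; choosing this continuous representative gives $\hat\beta_t=\hat a$ for all $t\in[0,T]$. This same Pontryagin computation is in fact an alternative one-line proof. The only mild obstacle is bookkeeping: controls are $L^2$ classes, so the statement must be read up to this identification, and Proposition \ref{prop:optimal controls of OC}(ii) is stated only for $t_0=0$. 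The Jensen argument above covers general $t_0$ without invoking it.
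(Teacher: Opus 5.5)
Your proof is correct, but its main route differs from the paper's. The paper argues only through the Pontryagin system \eqref{eq:OC.FBSDE.simple}: since $\dot\eta_t=0$, every solution has a constant backward component, and the optimal control is (minus) that component.

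You work directly with the cost. Replacing $\hat\beta$ by its time average $\hat a$ leaves $m_T$ unchanged. By the identity $\int_{t_0}^T|\hat\beta_t-\hat a|^2\,dt=\int_{t_0}^T|\hat\beta_t|^2\,dt-(T-t_0)|\hat a|^2$, this replacement strictly lowers the kinetic cost unless $\hat\beta=\hat a$ a.e.

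Your route is more elementary and slightly stronger. It needs neither the maximum principle nor differentiability of $g$. It also shows that \emph{every} control $\beta$ with average $a$ satisfies $J_c(t_0,\nu_0;\beta)\ge U(t_0,\nu_0,a)$. So the equivalence with the static problem for $U$, which the paper states right after the lemma, follows at once, without first using existence of optimizers. The paper's route is a one-liner inside its Pontryagin framework, and it also yields the first-order condition $\hat a=-(\hat m_T+\nabla g(\hat m_T))$.

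Two small remarks. First, your upgrade step is redundant. For $L^2$ controls the maximum principle also gives only $\hat\beta=-\eta$ a.e., so taking the constant representative right after the Jensen step is just as good. Second, \eqref{eq:OC.simple} is not literally the case $b=0$, $f=0$ of \eqref{eq:OC}, because the running cost $\frac12|m_t|^2$ is dropped as well; otherwise $\dot\eta_t=-m_t\neq 0$. Proposition \ref{prop:optimal controls of OC}(ii) therefore applies only by repeating its argument, as the paper indicates, not as a special case. Neither point affects your main argument.
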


\begin{proof} 
By the Pontryagin maximum principle, the optimal control $\hat \beta$ equals the backward component $\eta$ of some solution $(m,\eta)$ to the  forward-backward system of ODE's
\be
\label{eq:OC.FBSDE.simple}
\left\{\begin{array}{l}
\dot{m}_{t}= -\eta_{t}, \quad m_{t_0} = \nu_0,   \\
\dot{\eta}_{t}=0, \quad \eta_{T}=m_T+\nabla g\left(m_{T}\right) .
\end{array}\right.
\ee
The proof is completed by noticing that every solution has backward component $\eta$ constant in time.
\end{proof} 

Lemma \ref{lemma constant optimal controls} implies that the optimization problem is equivalent to the static optimization problem of the function
$$
a \mapsto U\left(t_0, \nu _0, a \right) :=  \frac{T-t_0}{2}|a|^{2}+G\left(\nu_{0}+(T-t_0) a \right) , 
$$
where we consider the function $G(y) = \frac{1}{2}\left|y\right|^{2} 
+  g\left(y\right),$ $y \in\mathbb R^d$.

For simplicity, we discuss the case $T =1$.
\begin{proposition}
\label{prop condition g}
    Assume  $\sigma >0$, $d=1$, $g$ to be concave, and such that there exist $C_-, C_+$ such that $g''(m) + 2 <0$ for $m \in (C_-,C_+)$ and $g''(m) + 2 \geq0$ for $m \in (-\infty,C_-] \cup [C_+, \infty)$. 
    Then there exist at most two solutions of the optimal control problem and one solution of the optimal control problem is selected with positive probability. 
\end{proposition}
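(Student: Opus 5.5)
With $T=1$ and $t_0=0$, Lemma \ref{lemma constant optimal controls} lets me replace \eqref{eq:OC.simple} by the scalar problem of minimizing
\[
a\mapsto U(0,\nu_0,a)=\tfrac12 a^2+G(\nu_0+a),\qquad G(y)=\tfrac12 y^2+g(y).
\]
Optimal controls of \eqref{eq:OC.simple} are exactly the constants $\hat a$ that minimize $U(0,\nu_0,\cdot)$. So it is enough to count these minimizers and then invoke Theorem \ref{thm:convergence}.

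\emph{Counting minimizers.} I change variables to the terminal point $y=\nu_0+a$. The first-order condition $\partial_a U=0$ then reads
\[
h(y):=2y+g'(y)=\nu_0 .
\]
Since $h'(y)=2+g''(y)$, the hypothesis on $g''$ gives the shape of $h$. It is nondecreasing on $(-\infty,C_-]$ and on $[C_+,\infty)$, and strictly decreasing on $(C_-,C_+)$.

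\begin{itemize}
\item \emph{At most one critical point in each monotone piece.} On each of the two outer pieces I need $h$ to be strictly increasing. I argue this from $2+g''\ge 0$ together with the fact that $2+g''$ cannot vanish on an interval without contradicting the sign pattern. If that fails, I restrict attention to minimizers, which I treat next. This gives at most one root per piece, so at most three critical points in total.
\item \emph{A root in $(C_-,C_+)$ is not a minimizer.} Such a root satisfies $\partial_a^2U=1+G''(y)=2+g''(y)<0$, so it is a strict local maximum of $U(0,\nu_0,\cdot)$.
\item \emph{Existence.} Because $g'$ is bounded, $U(0,\nu_0,\cdot)$ is coercive, so a minimizer exists. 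Every minimizer is a root in one of the two outer pieces.
\end{itemize}

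Hence there are at most two optimal controls. A possible subtlety is a degenerate outer piece where $2+g''=0$ on an interval, which would make $h$ flat there. I would handle it by noting that the assumed strict inequality on $(C_-,C_+)$ and the concavity of $g$ pin down where $h$ can be flat. Concretely, $g''\le0$ gives $h'\le 2$, and I expect a flat stretch of $h$ on an outer piece to be compatible with uniqueness of the root there only in nondegenerate situations. This is the step I expect to need the most care.

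\emph{Selection.} The proof of Theorem \ref{thm:convergence} applies verbatim to this model, so the sequence $m^N$ is tight. By Theorem \ref{thm:convergence}\ref{thm:convergence2}, any limit point $\mu$ along a subsequence is supported on the optimal trajectories $m^{\hat a}_t=\nu_0+t\hat a$, and by the counting step there are at most two of them. If there is one, it is selected with probability $1$. If there are two, say $\hat a_1\neq\hat a_2$, then
\[
\mu=p\,\delta_{m^{\hat a_1}}+(1-p)\,\delta_{m^{\hat a_2}}
\]
for some $p\in[0,1]$, so one of the two trajectories carries mass at least $\tfrac12>0$. In either case some optimal trajectory, and hence the corresponding mean field equilibrium, is selected with positive probability.
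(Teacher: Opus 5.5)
Your reduction to $a\mapsto U(0,\nu_0,a)$ and your study of $h(y)=2y+g'(y)$ are the same argument as the paper's. The paper just inspects the convexity/concavity of $U(0,m_0;\cdot)$, and since $\partial_aU(0,\nu_0,a)=h(\nu_0+a)-\nu_0$, monotonicity of $h$ on a piece is exactly convexity of $U$ there. Your selection step is likewise the paper's appeal to Theorem \ref{thm:convergence}.

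The genuine gap is the step you flag, and your proposed justification for it is wrong. The sign pattern only gives $2+g''\ge 0$ on $(-\infty,C_-]\cup[C_+,\infty)$, so $2+g''$ may vanish on a whole interval there, and concavity ($h'\le 2$) does not prevent this. Restricting to minimizers does not help either. A flat stretch $h\equiv\nu_0$ on $[y_a,y_b]$ makes $U(0,\nu_0,\cdot)$ constant on $\{a:\nu_0+a\in[y_a,y_b]\}$, and this interval can be the full set of global minimizers. Here is an explicit $g$:
\begin{itemize}
\item take $C_\pm=\pm1$ and $g''(y)=-3+y^2$ on $[-1,1]$;
\item on $[1,3]$, let $g''$ go linearly from $-2$ up to $0$ at $y=2$ and back down to $-2$, so that $h(3)=h(-1)-\tfrac43+2=h(-1)+\tfrac23$;
\item set $g''\equiv-2$ on $[3,4]$;
\item interpolate $g''$ linearly to $0$ on $[4,5]$ and on $[-2,-1]$, and put $g''=0$ outside $[-2,5]$.
\end{itemize}
This $g$ is concave and satisfies the sign pattern. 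It also satisfies the standing Assumption 1, since $g''$ is Lipschitz with compact support. For $\nu_0=h(3)$ one checks that $h<\nu_0$ on $(-\infty,3)$ and $h>\nu_0$ on $(4,\infty)$. Hence every constant $a$ with $\nu_0+a\in[3,4]$ is optimal for \eqref{eq:OC.simple}. So the ``at most two'' conclusion fails as literally stated, and with it the counting on which your selection step rests.

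What is missing is a nondegeneracy hypothesis. For example, require that $\{g''=-2\}$ has empty interior in $(-\infty,C_-]\cup[C_+,\infty)$; in particular $g''+2>0$ off $[C_-,C_+]$ suffices. With it, $h$ is strictly increasing on each outer piece, and your argument closes exactly as you wrote it:
\begin{itemize}
\item there is at most one root of $h=\nu_0$ per outer piece;
\item any root in $(C_-,C_+)$ is a strict local maximum, because $\partial_a^2U=2+g''<0$ there;
\item a minimizer exists by coercivity, so there are at most two optimal controls;
\item any subsequential limit $p\,\delta_{m^{\hat a_1}}+(1-p)\,\delta_{m^{\hat a_2}}$ puts mass at least $\tfrac12$ on one of them.
\end{itemize}
The paper's one-line proof (``studying the convexity/concavity \dots\ one deduces that $U$ admits at most two minima'') relies on the same unstated strictness, since a convex function may have an interval of minimizers. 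The defect is therefore in the statement, not only in your write-up. You should state the extra assumption explicitly rather than try to derive it.
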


\begin{proof}
The second order derivative (in the variable $a$) of $U(0,m_0;\cdot)$ is $U''(0,m_0;a) = 2 + g''(m_0 + a)$.  
Studying the convexity/concavity of $U(0,m_0;\cdot)$, one deduces that $U(0,m_0;\cdot)$ admits at most two (distinct) minima, which by Lemma \ref{lemma constant optimal controls} correspond to two distinct optimal controls for \eqref{eq:OC.simple}. 
The rest of the claim follows by Theorem \ref{thm:convergence}. 
\end{proof}

In particular, we obtain the following example of a submodular mean field game (see \cite{dianetti.ferrari.fischer.nendel.2019}) in which only extremal equilibria are plausible limits of the $N$-player game. 
\begin{proposition}
    \label{prop symmetric case two minima}
    Consider the case $\sigma >0$, $d=1$ with initial condition $\nu_0=0 $. 
    Suppose $g $ to be concave, even (i.e. $g(-m)= g(m)$)  and such that there exist $C_+$ such that $g''(m) + 2 <0$ for $m \in [0,C_+)$ and $g''(m) + 2 \geq 0$ for $m \in [C_+, \infty)$.
    Then, $U(0,0; \cdot)$ has two and only two (distinct) minima $\bar a >0 $ and $- \bar a$ and we have
    \[
    \lim_{N\to\infty} \P \circ (\hat{m}^{N} )^{-1} = \frac12 \delta_{m^-} +  \frac12 \delta_{ m^+},
    \]
    where $m_t^- := - t \bar{a}$ and $m^+_t:=t \bar{a}$.
    Moreover, $(\alpha^0,m^0)=(0,0)$ is a mean field equilibrium which is not selected as limit of Nash equilibria. 
\end{proposition}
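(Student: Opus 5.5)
The plan is to reduce everything to the static problem for $U(0,0;a)=\frac{1}{2}a^2+G(a)$ with $G(y)=\frac12 y^2+g(y)$, which is legitimate by Lemma \ref{lemma constant optimal controls} (with $T=1$, $t_0=0$, $\nu_0=0$). The proof then splits into three parts.

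\textbf{Step 1: structure of $U(0,0;\cdot)$.} Since $g$ is even, $U$ is even in $a$, and $U''(a)=2+g''(a)$.
\begin{itemize}
\item On $[0,C_+)$ we have $U''<0$, so $U$ is strictly concave there. Since $U'(0)=g'(0)=0$ by evenness, this gives $U'<0$ on $(0,C_+]$, so $a=0$ is a strict local maximum.
\item On $[C_+,\infty)$, $U$ is convex and $U(a)\to\infty$, because $g$ is Lipschitz by the standing Assumption and the term $a^2$ dominates. Hence $U$ restricted to $[C_+,\infty)$ has a minimizer, and $U'(C_+)<0$ places it in the interior.
\item It remains to show this minimizer $\bar a$ is unique. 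Suppose the minimizing set of the convex function were an interval $[a_1,a_2]$ with $a_1<a_2$. Then $U''=0$ on it, so $g''=-2$ there. I would rule this out by using the concavity and decrease of $g''$ implied by the hypotheses; if that proves delicate, I would assume strict convexity beyond $C_+$, since $g''+2\ge0$ together with concavity of $g$ should force $U'$ to be strictly increasing once it becomes nonnegative.
\end{itemize}
By evenness, the global minima of $U$ are exactly $\pm\bar a$, and $\bar a>0$. Note that $U$ is not minimized at $0$, because $U(\bar a)<U(0)$ by the strict decrease on $(0,C_+]$.

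\textbf{Step 2: selection.} The optimal controls are the constants $\pm\bar a$, so the optimal trajectories are $m^\pm_t=\pm t\bar a$. These are exactly two distinct optimal controls, one strictly positive. Theorem \ref{prop case two minima general} applies with $f=0$ and $b=0$; its symmetric-initial-law hypothesis is the standing one in this subsection. It gives
\[
\lim_{N\to\infty}\P\circ(\hat m^N)^{-1}=\tfrac12\delta_{m^-}+\tfrac12\delta_{m^+}.
\]
Strictly speaking, Theorem \ref{prop case two minima general} is written for the running-cost model. I would note, as the paper does, that the same arguments go through for \eqref{eq:OC.simple}, with $F_N\equiv0$ and $G_N$ still odd-gradient.

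\textbf{Step 3: the zero equilibrium.} The pair $(m,\eta)=(0,0)$ solves \eqref{eq:OC.FBSDE.simple}, because $\eta_T=m_T+g'(m_T)=g'(0)=0$ by oddness of $g'$. For the MFG \eqref{eq:MFG.simple} with $m\equiv0$, I would take $X$ solving $dX_t=-Y_t\,dt+\sigma dW_t$ with $Y_t=\E[X_T\mid\mathcal F_t]$. This is the linear FBSDE \eqref{eq:MFG.FBSDE} with $\nabla g(0)=0$, and it has the explicit solution $Y_t=(X_t)/(1+T-t)$. Taking expectations, $\E X_t$ solves the linear ODE with $\E X_0=\E\xi=0$, so $\E X_t=0$ and the fixed point $m\equiv 0$ is consistent. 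Since the cost is convex in $\alpha$ for fixed $m$, the maximum principle is sufficient, so this is a mean field equilibrium. The limit law charges only $m^\pm\neq0$, so this equilibrium is not selected.

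The main obstacle is the uniqueness of the minimizer on $[C_+,\infty)$ in Step 1. It is the only place where the hypothesis $g''+2\ge0$, which is not strict, could allow a flat piece.
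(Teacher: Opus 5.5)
Your route is the paper's: reduce to $U(0,0;a)=a^2+g(a)$ (with $T=1$) via Lemma \ref{lemma constant optimal controls}, split $U$ into a concave part and a convex part (the paper cites Proposition \ref{prop condition g} for this), observe that $0$ is not a minimizer, apply Theorem \ref{prop case two minima general}, and exhibit the zero-mean equilibrium.

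\textbf{The gap: uniqueness of the minimizer on $[C_+,\infty)$.} This is the step you flag yourself, and neither of your ways of closing it works. The hypotheses give no monotonicity or concavity of $g''$: concavity of $g$ only means $g''\le 0$. It is also false that $g''+2\ge 0$ together with concavity of $g$ makes $U'$ strictly increasing once it is nonnegative. The stated hypotheses in fact allow a flat piece of minimizers. Take $C_+=1$, and let $g$ be even with $g'(0)=0$ and, for $s\ge 0$,
\[
g''(s)=-3+s \text{ on } [0,1],\quad -1-|2s-3| \text{ on } [1,2],\quad -2 \text{ on } [2,3],\quad 2s-8 \text{ on } [3,4],\quad 0 \text{ on } [4,\infty).
\]
This $g$ is $C^2$ and concave, satisfies the standing Assumption, and has $g''+2<0$ on $[0,1)$ and $g''+2\ge 0$ on $[1,\infty)$. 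Yet $U'(a)=\int_0^a(2+g''(s))\,ds$ is negative on $(0,2)$, vanishes identically on $[2,3]$, and is positive afterwards. So every constant control $a$ with $2\le |a|\le 3$ is optimal, and Theorem \ref{prop case two minima general} does not apply.

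This step therefore cannot be proved from the statement as written. The paper's proof has the same gap: the ``at most two minima'' conclusion of Proposition \ref{prop condition g} rests on the same convexity reasoning, and this example contradicts it. Your fallback is therefore necessary, not optional. You need an extra hypothesis such as $g''+2>0$ on $(C_+,\infty)$, or merely that $\{g''=-2\}$ contains no interval there. Under it $U'$ is strictly increasing on $[C_+,\infty)$ and your Step 1 is complete. You also implicitly need $C_+>0$; with $C_+=0$ (e.g.\ $g\equiv 0$) the unique minimizer is $0$.

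\textbf{Two smaller points.} First, the symmetry $\P\circ(-\bar\xi^N)^{-1}=\P\circ(\bar\xi^N)^{-1}$ is not a standing assumption of Section \ref{sec:examples}; only $\bar\xi^N\to m_0$ is. It has to be added as a hypothesis before you can invoke Theorem \ref{prop case two minima general}, and the paper's proof also uses it without stating it.

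Second, your Step 3 is more accurate than the paper's. The paper checks $(\sigma W,0)$ against the terminal condition $Y_T=\E[X_T]+g'(\E[X_T])$. That is the mean-level condition, not the individual condition $Y_T=X_T+g'(\E[X_T])$. For $\sigma>0$, the control $\alpha\equiv 0$ is not optimal against $m\equiv 0$. Your equilibrium, with feedback $-X_t/(1+T-t)$ and zero mean flow, is the correct object. The ``$(0,0)$'' in the statement should be read at the level of $(\eta,m)$.
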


\begin{proof}
The characterization of the limit points follows by using Lemma \ref{lemma constant optimal controls}, Proposition \ref{prop condition g} and Theorem \ref{prop case two minima general}, after noticing that the minimum points of $U(0,0,; \cdot)$ are necessarily different from $0$.
Notice that $g'(0)=0$, so  that $(\sigma W,0)$ is a solution to the system  
$$
\left\{\begin{array}{l}
d X_t = -Y_{t} dt + \sigma dW_t, \quad \mathbb E[X_0] = 0,   \\
d Y_{t}= Z_t dW_t, \quad Y_{T}=\mathbb E[X_T]+  g' \left(  \mathbb E[X_T] \right) .
\end{array}\right.
$$
Thus, $(0,0) = (0,\mathbb E [\sigma W])$ is a mean field equilibrium by the Pontryagin maximum principle.  
Such an equilibrium does not correspond to any minimum, hence it is not selected as limit of Nash equilibria. 
\end{proof}



\subsection{Multi-dimensional symmetric selection} 
\label{sec:multiD}
We finally provide a multidimensional example with an infinite number of mean field equilibria. 
\begin{theorem}
\label{prop multidimensional} 
    Assume $\sigma >0$, $\nu_0=0 \in \mathbb R ^d$,  the distribution of $\bar \xi^N$ to be invariant under rotations,   $g(m)=\tilde g(|m|)$ for a concave function $\tilde g \in C^2([0,\infty))$ with $\tilde g '(0)=0$, and such that there exists $ C_+$ such that $\tilde g''(m) + 2 <0$ for $m \in [0,C_+)$ and $\tilde g''(m) + 2 \geq0$ for $m \in [C_+, \infty)$. 
    Then 
    \[
    \lim_{N\to\infty} \P \circ (\hat{m}^{N} )^{-1} =  \P \circ (m^\Theta)^{-1}, 
    \]
    where $m^\Theta_t = \Theta \hat a \,t$, $\hat a$ is the (unique) minimum of the function $a_1 \mapsto U (0,0; (a_1,0,...,0))$ on $(0,\infty)$, and $\Theta$ is a uniform random variable on the unit sphere $S^{d-1} = \{ r \in \R^d \, | \, |r|=1 \}$. 
    
    Moreover, $(\alpha^0,m^0)=(0,0)$ is a mean field equilibrium which is not selected as limit of Nash equilibria. 
\end{theorem}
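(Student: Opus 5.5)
We follow the scheme of the proof of Theorem \ref{prop case two minima general}, with the reflection symmetry $m\mapsto -m$ replaced by the full orthogonal group $O(d)$.

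The first step is to identify the minimizers of \eqref{eq:OC.simple} with $(t_0,\nu_0)=(0,0)$ and $T=1$. By Lemma \ref{lemma constant optimal controls}, optimal controls are constant, $\hat\beta\equiv a$, and minimize
\[
U(0,0;a)=\tfrac12|a|^2+\tfrac12|a|^2+\tilde g(|a|)=|a|^2+\tilde g(|a|).
\]
This is a radial function $\phi(r)=r^2+\tilde g(r)$ of $r=|a|$. We have $\phi'(0)=\tilde g'(0)=0$, and $\phi''=2+\tilde g''$ is negative on $[0,C_+)$ and nonnegative on $[C_+,\infty)$. Hence $\phi$ is strictly decreasing on $(0,r^*]$ for some $r^*\geq C_+$, and $\phi$ is convex beyond $C_+$. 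Moreover $\tilde g$ is concave with $\tilde g'(0)=0$, so $\tilde g'\le 0$; together with the boundedness of $\nabla g$ this gives $\phi(r)\to\infty$. Therefore $\phi$ has a unique minimizer $\hat a>0$ on $[0,\infty)$, exactly as in Proposition \ref{prop symmetric case two minima}, and $\phi(\hat a)<\phi(0)$. It follows that the set of optimal controls is the sphere $\{\hat a\theta:\theta\in S^{d-1}\}$, and the optimal trajectories are $m^\theta_t=\theta\hat a t$.

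The second step uses symmetry. Take $Q\in O(d)$. Since $g(Qm)=g(m)$, we have $\nabla g(Qm)=Q\nabla g(m)$ and $\nabla^2 g(Qm)=Q\nabla^2 g(m)Q^\tra$. It follows that $\nabla G_N(Qm)=Q\nabla G_N(m)$; the same holds for $F_N$, which here is $0$, and $b=0$. So $(Qm^N,Q\eta^N,Q\zeta^N)$ solves \eqref{eq:FBSDE.mean.niceFG} driven by $(Q\bar\xi^N,Q\bm W)$. By hypothesis $Q\bar\xi^N$ has the law of $\bar\xi^N$. The process $Q\bm W$ is a Brownian motion independent of it, so $(Q\bar\xi^N,Q\bm W)$ has the law of $(\bar\xi^N,\bm W)$. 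Weak uniqueness, from Theorem \ref{lemma existence uniqueness FBSDE.mean}, then gives $\P\circ(Qm^N)^{-1}=\P\circ(m^N)^{-1}$ for every $Q\in O(d)$.

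The third step identifies the limit. By Theorem \ref{thm:convergence}, $(m^N)$ is tight. Along any convergent subsequence the limit $\mu$ is supported on $\{m^\theta\}$, so $\mu=\P\circ(m^{\Theta})^{-1}$ for some random $\Theta$ with law $\lambda$ on $S^{d-1}$; this uses that $\theta\mapsto m^\theta$ is a homeomorphism onto its image. Rotation invariance passes to the limit by the continuous mapping theorem, because $x\mapsto Qx$ is continuous on $C([0,1];\R^d)$. Hence $\lambda$ is an $O(d)$-invariant probability measure on $S^{d-1}$, and by uniqueness of the invariant (Haar) measure it is the uniform measure. Every subsequence therefore has the same limit, and the whole sequence converges.

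This third step is the expected main obstacle, but only mildly: one must check that invariance is inherited by the limit law on the sphere. This is cleaner than the Portmanteau argument used in one dimension, since there is no need to exclude mass at $0$. Once $\mu$ is rotation invariant, the pushforward through the homeomorphism $\theta\mapsto m^\theta$ shows that $\lambda$ is invariant.

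Finally, $(0,0)$ is a mean field equilibrium. Since $\nabla g(0)=\tilde g'(0)\,\cdot=0$, the tuple $(X,Y,Z)=(\xi+\sigma W,0,0)$ with $\E[\xi]=0$ solves \eqref{eq:MFG.FBSDE}, as in Proposition \ref{prop symmetric case two minima}. Because $\phi(0)>\phi(\hat a)$, zero is not a minimizer, and by Theorem \ref{thm:convergence} this equilibrium is not charged by the limit of the Nash equilibria.
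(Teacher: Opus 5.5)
Your argument is correct in its main line, but it identifies the limit by a different route from the paper.

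\textbf{Comparison of routes.} The paper works with time-$t$ marginals. From rotation invariance of $\P\circ(m^N)^{-1}$ it derives $\P[m^N_t\in V]=\lambda_{d-1}(V\cap S^{d-1})$ for cones $V$. It transfers this to a subsequential limit through the Portmanteau inequality on open cones and outer regularity of the spherical measure. Only at the end does it use that the limit is carried by $\{m^\theta\}$, reading off the law of $\Theta$ from $m_t/(\hat a t)$.

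You stay at the level of path laws. Invariance under each $Q\in O(d)$ passes to any subsequential limit $\mu$ by the continuous mapping theorem. The homeomorphism $\theta\mapsto m^\theta$, together with $Qm^\theta=m^{Q\theta}$, turns this into $O(d)$-invariance of the law of $\Theta$. Uniqueness of the invariant probability on $S^{d-1}$ then concludes.

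Your route is shorter and avoids the delicate points of the marginal argument:
\begin{itemize}
\item With the paper's definition every nonempty cone contains $0$, so the only nonempty open cone is $\R^d$; cones must be taken without their vertex.
\item The resulting identity needs $m^N_t$ not to charge $0$.
\item Equality has to be recovered from one-sided Portmanteau bounds via complements.
\end{itemize}
The price is the appeal to uniqueness of the $O(d)$-invariant probability on the sphere. This is standard: average against Haar measure on $O(d)$ and apply Fubini.

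\textbf{Two steps need repair.} The paper is no more careful at either.

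First, $(\xi+\sigma W,0,0)$ does not solve \eqref{eq:MFG.FBSDE} for this model: $Y\equiv 0$ would force $0=Y_T=X_T+\nabla g(\E[X_T])=\xi+\sigma W_T$. What is true is the following.
\begin{itemize}
\item Since $\nabla g(0)=0$, the pair $(m,\eta)\equiv(0,0)$ solves \eqref{eq:OC.FBSDE.simple}.
\item With $m\equiv 0$ frozen, the linear FBSDE $dX_t=-Y_t\,dt+\sigma\,dW_t$, $dY_t=Z_t\,dW_t$, $Y_T=X_T$ is uniquely solvable with $\E[X_t]=\E[Y_t]=0$. Taking expectations gives $(1+T)\E[X_T]=\E[\xi]=0$.
\end{itemize}
So there is an equilibrium with mean flow $m^0\equiv 0$ and zero mean control, but the control itself is $-Y\neq 0$. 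The paper instead verifies a hybrid system with terminal condition $\E[X_T]+g'(\E[X_T])$.

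Second, convexity of $\phi$ beyond $C_+$ does not give a unique minimizer. If $\tilde g''\equiv -2$ on an interval where $\phi'$ vanishes, then $\phi$ is flat at its minimum level. The optimal controls then fill an annulus, and your identification of $\mu$ fails. You need $\phi'$ to have a single zero on $[C_+,\infty)$, for instance by requiring $\tilde g''+2>0$ there. The paper asserts uniqueness of $\hat a$ with the same one-line justification.
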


\begin{proof}
We first notice that the unique solution $(m^{N}, \eta^{N}, \zeta^{N})$ of the FBSDE
$$
\left\{\begin{array}{l} 
d m_{t}^{N}= -\eta_{t}^{N}  dt+\frac{\sigma}{N} \sum_{j} d W_{t}^{j},  \quad m^N_{0}=\bar \xi^{N}, \\
d \eta_{t}^{N}= \sum_{j} \zeta_{t}^{j,N} d W_{t}^{j}, \quad  \eta_{T}^{N}=\Big( \I_d + \frac{1}{N} \nabla^{2} g(m^N_T) \Big) \Big( m^N_T +  \nabla g (m^N_T) \Big), 
\end{array}\right.
$$
which gives the optimal control for the control problem \eqref{eq:OC.simple},
is invariant under rotations.
Indeed, for  a generic rotation  $R$ of  $\R^d$,
with some basic calculus we see that
the process $(R m^{N}, R \eta^{N},  \zeta^{N})$ solves the system
$$
\left\{\begin{array}{l} 
d R m_{t}^{N}= - R \eta_{t}^{N}  dt +\frac{\sigma}{N} \sum_{j} d ( R W^{j})_{t},  \quad R m^N_{0}= R \bar \xi^{N}, \\
d R \eta_{t}^{N}= \sum_{j}   \zeta_{t}^{j,N}  d ( R W^{j})_{t}, \quad  R \eta_{T}^{N}=  \Big( \I_d + \frac{1}{N} \nabla^{2} g( R m^N_T) \Big) \Big( R m^N_T +  \nabla g ( R m^N_T) \Big),
\end{array}\right.
$$
which is the same system as above, but with driving noises $(R \bar \xi ^N, R 
W ^1, ..., R 
W ^N )$. 
Since $\mathbb P \circ (R \bar \xi ^N, R W ^1, ..., R 
W ^N)^{-1} = \mathbb P \circ ( \bar \xi ^N,  W^1,..., W ^N)^{-1}$ by assumption, the uniqueness of the solution to the FBSDE (which follows exactly as in Theorem \ref{lemma existence uniqueness FBSDE.mean})  implies that 
$$
\mathbb  P \circ (R m^N)^{-1} = \mathbb  P \circ (- m^N)^{-1}.
$$
Therefore, also the distribution of $m^N_t$ is rotation invariant, and
for every cone $V$ (i.e., a Borel set $V \subset \R^d$ such that $s v \in V$ for any $s \in [0,\infty)$ and $v \in V$)  we have 
$$
\mathbb P [m^N_t \in V ] = \lambda_{d-1} (V \cap S^{d-1}),
$$
where $\lambda_{d-1}$ denotes the Lebesgue spherical measure on $S^{d-1}$.

Let $\mu \in \mathcal P (C([0,T], \R^d))$ be the limit of a subsequence (not relabeled) of $\mathbb P \circ (m^N)^{-1}$. 
Assume $\mu = \mathbb P \circ (m)^{-1}$, for a continuous stochastic process $m$. 
By Portmanteau Theorem, for every open cone $V$ we deduce that
$$
\begin{aligned}
    \mathbb P [m_t \in V ] &\leq \liminf_N \mathbb P [m^N_t \in V] =  \lambda_{d-1} (V \cap S^{d-1}). 
\end{aligned}
$$
Thus, for every cone $\tilde V$, the outer regularity of the spherical measure implies that
$$
\begin{aligned}
    \mathbb P [m_t \in \tilde V ] &\leq \inf \{  \mathbb P [m_t \in V ] \, | \, \tilde V \subset V, V \text{open} \} \\ 
    & \leq \inf \{ \liminf_N \mathbb P [m^N_t \in V] \, | \, \tilde V \subset V, V \text{open} \} \\ 
    &
    =  \inf \{  \lambda_{d-1} (V \cap S^{d-1}) \, | \, \tilde V \subset V, V \text{open} \}  = \lambda_{d-1} (\tilde V \cap S^{d-1}). 
\end{aligned}
$$
Thus, $m_t$ is distributed as the spherical measure; that is, 
$$
\begin{aligned}
    \mathbb P [m_t \in V ] =  \lambda_{d-1} (V \cap S^{d-1}). 
\end{aligned}
$$

By looking at the concavity/convexity of the function $a_1 \mapsto U (0,0; (a_1,0,...,0))$, we see that such a map has a unique minimum point $\hat a$ on $[0,\infty)$.
Thus, the minimizers of the control problem \eqref{eq:OC.simple} are constant controls $\theta \hat a$, with $\theta \in S^{d-1}$.
Hence, by Theorem \ref{thm:convergence}, $m$ is supported in the set $\{ m^\theta \, | \, m^\theta_t = \theta \hat a \, t, \ \theta \in S^{d-1} \}$.
Hence
$$
\begin{aligned}
    \mathbb P [m_t \in V ] = \mathbb P \Big[\frac{m_t}{\hat a t} \in V \cap S^{d-1} \Big] = \lambda_{d-1} (V \cap S^{d-1}). 
\end{aligned}
$$
Finally, since every subsequence have the same limit,  the sequence $(m^N)_N$ converges to $m$, thus completing the proof.

The proof that $(\alpha^0,m^0)=(0,0)$ is a mean field equilibrium which is not selected as limit of Nash equilibria is identical as in the previous proposition.
\end{proof} 

\begin{appendix}

\section{} 
    \subsection{Proof of Theorem \ref{lemma existence uniqueness FBSDE.mean}}\label{appendix lemma existence uniqueness FBSDE.mean}
   
    For simplicity of notation, we write $(m,\eta,\zeta)$ for $(m^N,\eta^N,\zeta^N)$.
    Define the function $\phi : [0,T] \to \R ^{d \times d}$ as the unique (symmetric) solution to the matrix valued system of Riccati ODE
    $$
    \begin{cases}
        \dot{\phi_t} &= \phi_t ^2  - \phi_t b - b^\tra \phi_t - \I_d, \\
        \phi_T &= \I_d,
    \end{cases}
    $$
    as ensured by \cite{yong1999linear}.
    Define the process $(\tilde \eta, \tilde \zeta)$ as
    $$
    \tilde \eta _t := \eta_t - \phi_t m_t
    \quad \text{and} \quad
    \tilde \zeta ^j_t := \zeta^j_t - \frac{\sigma}N \phi_t, \ j=1,...,N.
    $$
    If $(m,\eta,\zeta)$ solves the FBSDE \eqref{eq:FBSDE:mean}, using Ito formula we see that $(\tilde \eta, \tilde \zeta)$ solves the backward SDE 
$$
\begin{cases}
d \tilde \eta_{t} &= - \Big[ (b^\tra - \phi_t) \tilde \eta_t +  \frac{1}{N} \nabla^{2} f(m_t)  m_t  + \Big( \I_d + \frac{1}{N} \nabla^{2} f(m_t) \Big)   \nabla f (m_t)  \Big] dt +\sum_{j} \tilde \zeta_{t}^{j} d W_{t}^{j}, \\
\tilde \eta_{T}&=  \frac{1}{N} \nabla^{2} g(m_T)  m_T  + \Big( \I_d + \frac{1}{N} \nabla^{2} g(m_T) \Big)   \nabla g (m_T) .
\end{cases}
$$ 
Moreover, setting
$$
\tilde m _t := m_t \Phi_t, \quad \Phi_t :=  \exp \bigg( - \int_0^t (b-\phi_s)ds \bigg),
$$
we have
$$
d \tilde m _t = - \Phi_t \tilde \eta_t dt + \Phi_t \frac{\sigma}{N} \sum_j dW^j_t, \quad \tilde m_0 = \frac{1}{N}\sum_j \xi^j.
$$

Actually, one can easily verify that $(m,\eta,\zeta)$ solves the FBSDE \eqref{eq:FBSDE:mean} if and only if the process 
$(\tilde m,\tilde \eta,\tilde \zeta)$ solves the system of FBSDE
$$ 
\begin{cases}
d \tilde m _t &= - \Phi_t \tilde \eta_t dt + \Phi_t \frac{\sigma}{N} \sum_j dW^j_t,  \\
d \tilde \eta_{t} &= - \Big[ (b^\tra - \phi_t) \tilde \eta_t +  \frac{1}{N} \nabla^{2} f(\Phi_t^{-1} \tilde m _t )  \Phi_t^{-1} \tilde m _t   + \Big( \I_d + \frac{1}{N} \nabla^{2} f(\Phi_t^{-1} \tilde m _t ) \Big)   \nabla f (\Phi_t^{-1} \tilde m _t)  \Big] dt \\
&\quad +\sum_{j} \tilde \zeta_{t}^{j} d W_{t}^{j}, \\
\tilde m_0 & = \frac{1}{N}\sum_j \xi^j,
\qquad 
\tilde \eta_{T}=  \frac{1}{N} \nabla^{2} g( \Phi_T^{-1} \tilde m _T)  \Phi_T^{-1} \tilde m _T  + \Big( \I_d + \frac{1}{N} \nabla^{2} g(\Phi_T^{-1} \tilde m _T) \Big)   \nabla g (\Phi_T^{-1} \tilde m _T) ,
\end{cases}
$$ 
where $\Phi_t^{-1}$ denotes the inverse of the matrix $\Phi_t$.

Thanks to our assumptions on the boundedness and on the Lipschitzianity of the functions $\nabla f (y), \nabla^2 f (y), \nabla^2 f(y) y,\nabla g (y), \nabla^2 g (y), \nabla^2 g(y) y$, and thanks to the non-degeneracy (which follows from the boundedness of $\phi$), Theorem 2.6 in \cite{delarue.02} provides the existence of a unique solution  $(\tilde m,\tilde \eta,\tilde \zeta)$ to the previous FBSDE. 
Hence, the inverse transformation gives the existence of a unique solution $( m, \eta, \zeta)$ to \eqref{eq:FBSDE:mean}, which completes the proof.

\subsection{Proof of Theorem \ref{thm:EU NE}}\label{appendix thm:EU NE}
The argument consists in constructing the solution  to \eqref{eq:FBSDE:N} starting from the solution of \eqref{eq:FBSDE:mean}, which by Theorem \ref{lemma existence uniqueness FBSDE.mean} exists unique.

\emph{Step 1.} 
Let $(m^*, \eta^*, \zeta^*)$ be the unique solution to \eqref{eq:FBSDE:mean}, and consider the FBSDE
\begin{equation}
\label{eq:FBSDE:N fixed m}
\left\{
\begin{array}{l}
d X_{t}^{i}=( b X^i_t-Y_{t}^{i}) d t+\sigma d W_{t}^{i}, \qquad X_{0}^{i}=\xi^{i},  \\
 d Y_{t}^{i}=- \big[ b^\tra Y^i_t + 
 \big( \I_d + \frac{1}{N} \nabla^{2} f(m^*_t) \big) ( X^i_t +  \nabla f (m^*_t)) \big] dt+\sum_{j=1}^{N} Z_{t}^{i j} d W_{t}^{j}, \\
Y_{T}^{i}=\big( \I_d + \frac{1}{N} \nabla^{2} g (m^*_T) \big) ( X^i_T +  \nabla g (m^*_T)).
\end{array}\right.
\end{equation}
For any fixed $i=1,\dots, N$, this is a linear FBSDE with random coefficients for the unknowns $(X^i, Y^i, (Z^{ij})_j)$ which can be seen to represent the optimality conditions of a linear-quadratic stochastic control problem with random coefficients.  The existence of a unique adapted solution is ensured by \cite[Theorem 2.2]{tang2003general} as soon as the matrices $\I_d + \frac1N \nabla^2 f(m^*_t)$ and $\I_d + \frac1N \nabla^2 g(m^*_T)$ are nonnegative semi-definite, which is the case for $N$ large enough; notably, the solution can be represented by a backward stochastic Riccati equation.

\emph{Step 2.} 
We next look at the empirical means related to $(\bm{X}, \bm{Y}, \bm{Z})$, namely 
$$
m^N_t = \frac{1}{N} \sum_{i=1}^N X_{t}^{i}, \quad  \eta_{t}^{N}=\frac{1}{N} \sum_{i=1}^N Y_{t}^{i}
\quad \text{and}\quad
\zeta_{t}^{j, N}=\frac{1}{N} \sum_{i=1}^N Z_{t}^{i j}, \ j=1,...,N,
$$
and want to show that $m^N = m^*$.
Indeed, averaging over the components of \eqref{eq:FBSDE:N fixed m}, we find that $(m^N,\eta^N, \zeta^N)$ solves the system
$$
\left\{\begin{array}{l} 
d m_{t}^{N}= (b m^N_t-\eta_{t}^{N} ) dt+\frac{\sigma}{N} \sum_{j} d W_{t}^{j},   \\
d \eta_{t}^{N}= - \big[ b^\tra \eta^N_t + \big( \I_d + \frac{1}{N} \nabla^{2} f(m^*_t) \big) ( m^N_t +  \nabla f (m^*_t) ) \big] dt +\sum_{j} \zeta_{t}^{j,N} d W_{t}^{j}, \\
m^N_{0}=\frac{1}{N} \sum_{j} \xi^{i}, \qquad \eta_{T}^{N}=\big( \I_d + \frac{1}{N} \nabla^{2} g(m^*_T) \big) ( m^N_T +  \nabla g (m^*_T) ).
\end{array}\right.
$$ 
Such a system is equivalent to the FBSDE \eqref{eq:FBSDE:N fixed m}, hence it admits a unique strong solution. 
Noticing that $(m^*, \eta^*, \zeta^*)$ is also a solution to the previous system, we deduce that
$$(m^*, \eta^*, \zeta^*) = (m^N,\eta^N, \zeta^N),$$
which in turn implies that $(\bm{X}, \bm{Y}, \bm{Z})$ is the unique solution to the FBSDE \eqref{eq:FBSDE:N}.

\emph{Step 3.} Via a classical fixed point argument, one can construct a Nash equilibrium $\bm{\alpha} = (\alpha^1,...,\alpha^N)$ for a weak formulation of the game; the details are long but standard, and thus are omitted. Then,  stochastic maximum principle implies any equilibrium gives a solution to a weak formulation of the FBSDE \eqref{eq:FBSDE:N}. 
In light of the previous step, for $\sigma >0$ and $N$ large enough, system \eqref{eq:FBSDE:N} admits a unique strong solution and thus the equilibrium is  is unique, strong, and given by  $\alpha^i = -Y^i$ for any $i=1,...,N$.

\end{appendix}

\bibliographystyle{siam}
\bibliography{jodi.bib}

\end{document}